\newcommand{\sma}{\left(\begin{array}}
\newcommand{\fma}{\end{array}\right)}
\newcommand{\rr}{\mathbb R}
\newcommand{\z}{\mathbb Z}
\newcommand{\q}{\mathbb Q}
\newcommand{\n}{\mathbb N}
\newtheorem{lem}{Lemma}[section]
\newtheorem{co}[lem]{Corollary}
\newtheorem{thm}[lem]{Theorem}
\newtheorem{prop}[lem]{Proposition}
\newtheorem{qu}[lem]{Question}
\newenvironment{proof}{\textbf{Proof.}}{\newline\hspace*{\fill}{$\Box$}}
\begin{document}
\title{Proving finitely presented groups are large by computer}
\author{J.\,O.\,Button\\
Selwyn College\\
University of Cambridge\\
Cambridge CB3 9DQ\\
U.K.\\
\texttt{jb128@dpmms.cam.ac.uk}}
\date{}
\maketitle
\begin{abstract}
We present a theoretical algorithm which, given any finite presentation 
of a group as input, will terminate with answer yes if and only if the 
group is large. We then implement a practical version of this algorithm 
using Magma and apply it to a range of presentations. Our main focus
is on 2-generator 1-relator presentations where we have a complete
picture of largeness if the relator has exponent sum zero in one generator and
word length at most 12, as well as when the 
relator is in the commutator subgroup and has word length at most 18.
Indeed all but a tiny number of presentations define large groups.
Finally we look at
fundamental groups of closed hyperbolic 3-manifolds, where the algorithm
readily determines that a quarter of the groups in the Snappea closed
census are large.
\end{abstract}

\section{Introduction}

A finitely presented group $G$ is said to be large if there exists a finite
index subgroup possessing a surjective homomorphism to a non abelian
free group (without loss of generality we can assume this is the free group
$F_2$ of rank 2). This notion is used more widely for finitely
generated groups, but in this paper all groups considered will be
finitely presented. In any case it is a strong property which implies a
whole host of consequences: the most relevant one here is that $G$ has
infinite virtual first Betti number (meaning that $G$ has finite index
subgroups with arbitrarily large first Betti number). It is unknown if
there is an algorithm which takes as input a finite presentation and tells
us whether or not the group defined by that presentation is large. The
two standard methods used in establishing unsolvability are to show that the
property is Markov or is incompatible with free products (see \cite{ls}
Chapter IV Section 4), but the property of being large is neither of these,
and nor is its negation. However there is a partial algorithm for
largeness as pointed out by I.\,Kapovich
which is guaranteed to terminate with the answer yes if the
input presentation gives rise to a large group but which will not terminate
otherwise. In \cite{htrs} it is noted that there is a partial algorithm
that will tell if a presentation has a free quotient of rank $k$ (but
which will not terminate otherwise). Therefore one
can begin running this on a given finite presentation for a group $G$.
Although this might run for ever, even if $G$ is large, one immediately
starts elsewhere a separate new process to evaluate the finite index 
subgroups of $G$, along with a presentation for each subgroup, and then 
further starts the free quotient algorithm many times in parallel on each
finite index subgroup. But nobody would ever want to implement this: the
free quotient algorithm is described in \cite{htrs} as being totally
impractical for long presentations, so we would be running an extremely
slow process a vast number of times simultaneously.

In this paper we describe an alternative partial algorithm for largeness
that one might actually want to implement. Furthermore we do this by
writing a program in Magma and we apply it to a considerable range of
presentations. It is based on \cite{meis} Theorem 2.1 which says that
if $G$ is a finitely presented group having a homomorphism $\chi$ onto
$\z$ such that the Alexander polynomial $\Delta_{G,\chi}$ relative to
$\chi$ is zero, or zero modulo a prime $p$, then $G$ is large. This in
turn is based on a result of Howie in \cite{hw}. The Alexander polynomial
can be calculated reasonably efficiently given a presentation for $G$
(the time consuming part of the process for long presentations being
the calculation of large determinants) so it would seem that one needs to
go through the finite index subgroups $H$ and check each Alexander polynomial
in turn. This is essentially the idea, but the problem is that when 
$\beta_1(H)\geq 2$ we have infinitely many homomorphisms from $H$ onto
$\z$. Therefore we need to establish that we can determine by a finite process
whether or not there exists one of these
homomorphisms with zero Alexander polynomial. Indeed it is not surprising
that it will often be a finite index subgroup of $G$ with first Betti
number at least two which allows us to conclude largeness, so we will
want to be able to do this quickly. We describe our algorithm in Section
2 and then we go on to report our results when running it on a computer.
Our first application in Section 3 is to deficiency 1 groups, that is
finitely presented groups where the given presentation has one more
generator than relator. We have a particular interest in the largeness of
deficiency 1 groups: although they are not always large, unlike groups of
deficiency two or above, we have results in \cite{meis} and \cite{megg}
that they are often large. Therefore it would be good to have experimental
evidence of this as well. Moreover a finite index subgroup of a deficiency
1 group also has deficiency 1.

In Section 3 we only look at 2 generator 1 relator presentations. This is
not because of any limitations of the method (indeed when applying the
program to such a presentation, each finite index subgroup that we work
with will have a deficiency 1 presentation which is rarely 1 relator) but
because we are able to cover a lot of ground in this special case. We look
at presentations where one of the two generators has zero exponent sum in
the relator, as any 2 generator 1 relator presentation can be put in such
a form using an automorphism of $F_2$. For these presentations where the
relator is of length 12 or less, we have a definitive result. It can be
summarised by saying that the vast majority of presentations are large,
the presentations which are not large are listed in Tables 1 and 2 and
all of these are groups which were already known not to be large.
(Sometimes these groups are given by non standard presentations but they
were already known too.) Amongst these presentations $\langle a,t|r\rangle$
where $r$ has exponent sum 0 in $t$, we have those where the relator is
of height 1, which means that $r$ can be written purely in terms of 
$a^{\pm 1}$ and $ta^{\pm 1}t^{-1}$. We have a result in \cite{megg} which
gives a much more efficient criterion for largeness of such a presentation
and so we consider height 1 relators of length at most 14. For these we
are able to list all of the non large presentations except for two where
we believe that they are not large but we do not recognise them as groups that
are already known not to be large. Also in \cite{megg} we gave an example of
a large word hyperbolic group which is of the form $F_k\rtimes_\alpha\z$,
but the automorphism $\alpha$ was reducible. Here we are able with use of the
computer to give the first example of a large word hyperbolic
group of this form where all
non-trivial powers of the automorphism are irreducible.

We pay further attention to where the relator is in the commutator subgroup
$F_2'$. These are very often large (unless the group is $\z\times\z$ in
which case the relator, if cyclically reduced, is just a commutator of the
two generators) but in \cite{bmgg} an example was given of a non large group
where the particular relator $r_0$ has length 18. We again have a definitive
result in that if $r$ is a cyclically reduced word in $F_2'$ 
and has length at most 18 then our program
shows that either the group $G$ is large, or $r$ has length 4 so that
$G=\z\times\z$, or $r$ has length exactly 18 and is just a cyclic
permutation of $r_0$ or $r_0^{-1}$. 

Our final application in Section 4 is to a class of groups which have
deficiency 0 rather than 1. These are the fundamental groups of closed
orientable hyperbolic 3-manifolds and an open question asks if they are
always large. We already have a sample of over 10,000 examples to work
with which is the census from the program Snappea \cite{cen} and it is also
available as a Magma database where the fundamental groups of these
3-manifolds are given. It is known by recent results that if a closed
hyperbolic 3-manifold is arithmetic and has positive virtual first Betti
number then we have largeness. However positive virtual first Betti number
for all 3-manifolds in the census was established using computational methods
in \cite{dnth}. Therefore all arithmetic 3-manifolds in the census are
known to be large but there are no such results yet in the non arithmetic
case. Our requirement for largeness of having a zero Alexander polynomial
means that only subgroups with positive first Betti number can be of use in
satisfying this condition. Consequently in order to help with the computations
we restrict attention to the groups in the census having a subgroup with
positive first Betti number of index at most 5. There are 2856 such groups
in the census which is over a quarter of the total, and they can be found
quickly using Magma. Our program proves that most of these groups are
large; indeed we are left with 116 groups in Table 4 where we did not
establish this within the limited running times. Moreover there are 132
groups in the census which themselves have positive first Betti number,
and a further 305 which have finite first homology but an index 2 subgroup
with positive first Betti number. None of these are left over in the list
so we can conclude that a 3-manifold in the closed census
with positive first Betti number or which has a double cover with
positive first Betti number also has large fundamental group.

\section{Description of the algorithm}

Given a finitely presented group $G$ that we wish to prove is large,
we will be done if we can find a finite index subgroup $H\leq_f G$
and a surjective homomorphism from $H$ to $\z$ such that the
Alexander polynomial $\Delta_{H,\chi}$ with respect to $\chi$ is the
zero polynomial by \cite{meis} Theorem 2.1.
Here the Alexander polynomial can either have
coefficients in $\z$ or modulo a prime $p$ (we will sometimes refer to
the former as the mod 0 case). However we also have:
\begin{prop} If $H$ is a finitely presented group which has a surjective
homomorphism $\theta$ to a non-abelian free group $F_n$ of rank $n\geq 2$ then
we have homomorphisms $\chi$ from $H$ onto $\z$ with $\Delta_{H,\chi}=0$.
\end{prop}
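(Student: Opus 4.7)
The plan is to construct $\chi$ by composing $\theta$ with a surjection $\psi\colon F_n\twoheadrightarrow\z$, and then to show $\Delta_{H,\chi}=0$ by exhibiting the Alexander module at $\chi$ as a module of positive rank over $\Lambda:=\z[t^{\pm 1}]$.

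First, fix a free basis $y_1,\ldots,y_n$ of $F_n$ and let $\psi\colon F_n\to\z$ send $y_1\mapsto 1$ and $y_j\mapsto 0$ for $j\geq 2$. Then $\chi:=\psi\circ\theta$ is a surjection $H\twoheadrightarrow\z$. A Reidemeister--Schreier calculation with Schreier transversal $\{y_1^k:k\in\z\}$ shows that $\ker\psi$ is the free group on $\{y_1^k y_j y_1^{-k}:k\in\z,\,2\leq j\leq n\}$, so its abelianization, regarded as a $\Lambda$-module with $t$ acting by conjugation by $y_1$, is the free module $\Lambda^{n-1}$. As $n\geq 2$, this is a non-torsion $\Lambda$-module.

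Next I would transport this positivity to $H$. Choose $x_1\in H$ with $\theta(x_1)=y_1$, so also $\chi(x_1)=1$, and use $x_1$ to give $(\ker\chi)^{ab}=H_1(\ker\chi;\z)$ its $\Lambda$-module structure. Since $\theta$ is surjective and $\ker\chi=\theta^{-1}(\ker\psi)$, the restriction of $\theta$ to $\ker\chi$ is itself a surjection onto $\ker\psi$; abelianizing produces a $\Lambda$-equivariant surjection $(\ker\chi)^{ab}\twoheadrightarrow(\ker\psi)^{ab}\cong\Lambda^{n-1}$, equivariance holding because the chosen lift $x_1$ maps to $y_1$. Hence the Alexander module of $H$ at $\chi$ has positive $\Lambda$-rank, so is not torsion, and in the convention of \cite{meis} this is precisely the condition $\Delta_{H,\chi}=0$. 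Reducing everything modulo a prime $p$ handles the mod-$p$ case identically.

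The hardest part is really just bookkeeping: verifying that the chosen lift $x_1$ renders $\theta_\ast$ genuinely $\Lambda$-equivariant, and matching up ``non-torsion Alexander module'' with the exact definition of $\Delta_{H,\chi}=0$ used in \cite{meis}. Once those points are settled, the explicit surjection onto $\Lambda^{n-1}$ furnishes the proof.
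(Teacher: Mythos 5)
Your proof is correct and is essentially the same argument as the paper's: factor $\chi$ through $F_n$, observe that $\theta$ carries $\ker\chi$ onto the kernel of the induced map on $F_n$, and then read off that the Alexander module of $H$ at $\chi$ is too big for $\Delta_{H,\chi}$ to be nonzero. The only difference is in how you see that the kernel in $F_n$ is big: you compute $(\ker\psi)^{\mathrm{ab}}\cong\Lambda^{n-1}$ explicitly via Reidemeister--Schreier and pass to $\Lambda$-rank, whereas the paper simply cites that $F_n$ has no non-trivial finitely generated normal subgroups of infinite index, concludes $\ker\tilde\chi$ is infinitely generated free so that $\beta_1(\ker\tilde\chi;\q)=\infty$, and then invokes \cite{meis}~Corollary~2.2 (which is exactly the ``non-torsion Alexander module $\iff$ $\Delta=0$'' bookkeeping you flag at the end). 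These are formally equivalent; your version is a bit more self-contained, the paper's is shorter.
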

\begin{proof}
There are homomorphisms $\chi$ onto $\z$ which factor through $F_n$; take any
one of these so that $\chi=\tilde{\chi}\theta$. Then $\theta$ sends ker $\chi$
onto ker $\tilde{\chi}$, but the free group $F_n$ has no non-trivial finitely
generated normal subgroups of infinite index, so ker $\tilde{\chi}$ is an 
infinitely generated free group with $\beta_1(\mbox{ker }\tilde{\chi};\q
)=\infty$. This implies that $\Delta_{H,\chi}=0$ by \cite{meis} Corollary 2.2.
\end{proof}

Therefore this condition of having a finite index subgroup with zero Alexander
polynomial relative to some homomorphism is both necessary and sufficient for
a finitely presented group to be large. 
Recall that there is an algorithm which takes as input a finite presentation
and a positive integer $n$ and which outputs all the (finitely many)
subgroups $H$ having index $n$ in the group $G$ defined by the presentation. 
This is shown in \cite{disch} and is based on the Todd-Coxeter coset 
enumeration process. The output for each $H$ is a list of generators of $H$ and
a coset table for the right regular action of $G$ on the cosets of $H$. This
allows us by the Reidemeister-Schreier rewriting process to give a finite
presentation for $H$. In this section we describe an
algorithm that, given a finite presentation of a group $G$,
works out whether or not there is a homomorphism $\chi$ and $p$
(or 0) such that $\Delta_{G,\chi}=0$ mod $p$ (or mod 0). 
Consequently we then have a partial algorithm for largeness by applying
this to each finite index subgroup of $G$ in turn.

It is important for a practical
algorithm to be able to deal with all of these cases simultaneously:
whilst a large group $G$ will always have some finite index subgroup
$H$ where $\Delta_{H,\chi}$ is 0 mod 0, the index $[G:H]$ could be
much larger than $[G:S]$ where $\Delta_{S,\chi}$ is, say, 0 mod 2.
However it is not efficient to run our algorithm on one subgroup $S$
modulo successive primes as we
need to know when we can give up on $S$ and move on to another subgroup.

We note that our approach depends markedly on the first Betti number
$\beta_1(G)$. If this is zero then there are no homomorphisms $\chi$
onto $\z$ and we must reject $G$ immediately. If $\beta_1(G)=1$ then we
have just one $\chi$ (up to sign) and the evaluation of $\Delta_{G,\chi}$
is essentially a straight calculation, although we need to work
modulo all primes simultaneously.
However if $\beta_1(G)\geq 2$ we have
infinitely many $\chi$. Thus we have the advantage of many chances to
find a $\chi$ with $\Delta_{G,\chi}$ zero but the disadvantage that
we cannot test all of the $\chi$ individually and so we need a method
of narrowing down our search.

Let $\beta_1(G)=b$. If $b=1$ then the Alexander polynomial
$\Delta_{G,\chi}$ is an element (defined up to multiplication by units)
of the Laurent polynomial ring $\z[t^{\pm 1}]$ which is the integer
group ring of $\z$.
For a particular $\chi$ we can find $\Delta_{G,\chi}$
in the following way: given a presentation
$\langle x_1,\ldots ,x_n|r_1,\ldots ,r_m\rangle$ of $G$, we apply 
Fox's free differential calculus to form the Alexander matrix $A$
which is an $m\times n$ matrix with entries in 
$\z[t^{\pm 1}]$. We then calculate all the $(n-1)\times
(n-1)$ minors of $A$: namely the determinants of the submatrices of $A$
formed by deleting one column and the necessary number of rows to
make the submatrices square, so it will be $m-n+1$ rows and there will
be $n\times\sma{c} m\\{m-n+1}\fma$
different minors. Then $\Delta_{G,\chi}$ is
defined to be the highest common factor of these minors and
consequently it is 0 (or 0 mod $p$) if and only if all of the minors are
0 (or 0 mod $p$). 

If $I$ is a subset of size $m-n+1$ chosen from $\{1,2,\ldots ,m\}$ then
we use $M_j^I$ to denote the minor with the rows in $I$ removed along with
the $j$th column. If $\chi(x_1)=a_1,\ldots ,\chi(x_n)=a_n$ then we have
the identity $M_j^I(1-t^{a_i})=M_i^I(1-t^{a_j})$. Consequently if $a_j=0$
then $M_j^I=0$ anyway (by taking $i$ for which $a_i\neq 0$) and so there is
no point in calculating this minor, but otherwise we have $M_j^I=\delta^I
\psi_{a_j}(t)$ where $\delta^I$ is independent of $j$ and $\psi_k(t)$ is
equal to $(1-t^k)/(1-t)$. This is because only $1-t$ can
divide all of $1-t^{a_1},\ldots ,1-t^{a_n}$. Consequently if we take the
first column $j$ such that $\chi(x_j)\neq 0$ then we have that 
$\Delta_{G,\chi}$ is the highest common factor of the $\delta^I$ where $I$
is varied over all possible subsets, thus meaning that we have reduced the
number of minors that need to be calculated to $l=\sma{c} m\\m-n+1\fma$
minors, thus reducing the work by a factor of $n$. 

Therefore in the case $b=1$ we merely form the Alexander matrix $A$ and,
on taking the above $j$ we calculate in turn the minors
$M_j^{I_1},\ldots ,M_j^{I_l}\in\z[t^{\pm 1}]$.
If they are all the zero polynomial then
we have proved largeness. If $M_j^{I_i}$ is the first non-zero minor then we
look at its content: if this is 1 then we stop immediately with the
answer no, otherwise we calculate $M_j^{I_{i+1}}$ and let $c$ be
gcd (content($M_j^{I_i}$),content($M_j^{I_{i+1}}$)). 
We continue replacing $c$ with
the  gcd of $c$ and the content of the next minor and stop with no if $c$
becomes 1, otherwise when we reach the last minor
$M_j^{I_l}$ we have proved that 
$\Delta_{G,\chi}$ is 0 modulo any prime that divides $c$ so we have largeness.

In fact for $\beta_1(G)=1$ we will not find that all of the minors are
identically zero. This can be seen because they continue to be zero
if the Alexander matrix is evaluated at $t=1$, but this is just a
presentation matrix for the abelianisation $G/G'$ which is of the form
\[C_{d_1}\times\ldots\times C_{d_k}\times\z\qquad\mbox{ for }d_1|d_2|
\ldots |d_k.\]
Thus the first elementary ideal of $A|_{t=1}$, which is an invariant of the
abelian group, is $d_1\ldots d_k$. Consequently any primes dividing the
content of a minor $M_j^I$ 
must also divide $M_j^I|_{t=1}$ and hence $d_k$. This
implies two improvements: we should always reject groups whose
abelianisation is just $\z$, and more generally we can work in the ring
$(\z/d_k\z)[t^{\pm 1}]$ rather than $\z[t^{\pm 1}]$ 
when evaluating determinants if this will be significantly quicker.
(In fact when using Magma, we always found that it was better to
work modulo the primes dividing $d_k$ separately rather than with respect
to a composite modulus, and this did give an advantage of speed over the
characteristic zero case.)
 
We now proceed to describe a theoretical algorithm in the case when
$b\geq 2$. In order to consider all $\chi$ together, we can replace
the ring $\z[t^{\pm 1}]$ above with the ring 
$\z[t_1^{\pm 1},\ldots ,t_b^{\pm 1}]$
which is the integral group ring of the free abelianisation $ab(G)=\z^b$
of $G$. We also have a Fox calculus in this case
(see \cite{memap} Section 2 for an exposition in line with our approach
here) and so can form a more general Alexander matrix $B$ with entries
in $\z[ab(G)]$ and corresponding minors $N_j^I$. As any surjective
$\chi:G\rightarrow\z$ will factor through the natural map
$\alpha:G\rightarrow ab(G)$ and so can be written as $\tilde\chi\alpha$,
we have that the Alexander matrix $A$ with respect to any given
homomorphism $\chi$ is just $B$ evaluated at $\tilde\chi$ and
consequently the minors $M_j^I$ are equal to $N_j^I|_{\tilde\chi}$.

Thus we calculate all the minors $N_j^I$ which are multivariable
polynomials with coefficients in $\z$ 
and we now need to consider which $\chi$ will
make all the minors vanish. We do this by regarding the minor $N_j^I$
as a finite subset of
lattice points in $\z^b$ with each point weighted by a non-zero integer, 
where each monomial that appears in $N_j^I$ with a non-zero
coefficient is a lattice point and the coefficient is the weight. (The
ambiguity of units just means that we can shift $N_j^I$ by unit 
translations.) We picture evaluation of
$N_j^I$ at $\tilde\chi$ in the following way: we extend
$\tilde\chi$ to an affine map 
$\phi:\mathbb R^b\rightarrow\mathbb R$. Then for $x\in\mathbb R$ we know that
$\phi^{-1}(x)$ is a hyperplane and $N_j^I$ is zero on evaluation precisely
when the following condition is satisfied: for all $m\in\z$ with $\phi^{-1}(m)
\cap N_j^I\neq\emptyset$, we require that the sum of the weights 
corresponding to the points of $N_j^I$ in this hyperplane $\phi^{-1}(m)$ is
zero. Let us refer to this situation as ``$N_j^I$ cancels along parallel
hyperplanes of constant $\chi$''.

Starting with $N_1$ (which is defined to be the first minor 
we calculate that happens not to be identically zero) we
take any lattice point $\bf x$ in $N_1$. Now $\bf x$ must cancel with other
lattice points in $N_1$ on evaluation so we take each other lattice point
$\bf y$ in turn and join $\bf x$ and $\bf y$ by a line ${\bf x}+U$ where
$U$ is a one dimensional subspace of $\rr^b$. We now take the quotient
vector space $V=\rr^b/U$ of dimension $b-1$ and use the quotient map $q$ to
regard $N_1$ as a finite subset of $V$ with with new weights obtained by 
summing within the translates of $U$. We now pick a basepoint of $q(N_1)$,
draw a line from it to the other points of $q(N_1)$ and continue
recursively. This process stops either because an image of $N_1$ is the
zero polynomial in dimension $d$, or we reach $d=1$ with a non zero
polynomial. In the latter case we can reject this $\chi$ (at least if we
are working modulo 0) but in the former we move on to the next minor
$N_2$. We first apply to $N_2$ the composition of the quotient maps
applied so far and then we continue
in the same way, by picking a basepoint for this image of $N_2$
and another point to join it to and proceed as before, 
thus reducing $d$ (or $N_2$ might become 0 by
dimension $d$, in which case we move on to the next minor and so on).
Eventually we have either found that all the minors have vanished at
dimension $d\geq 1$, in which case any $\chi$ which factors through the
composition of quotient maps applied so far will have $\Delta_{G,\chi}
=0$, or we reach dimension 1 and conclude that we have failed to reach
a suitable $\chi$ along this path, so we must backtrack.

This will do as a starting algorithm, however there are
areas where serious improvement would be hoped for. First the above
only works modulo 0 and we want to be able to take advantage of
occasions where $\Delta_{G,\chi}$ is zero modulo a prime (indeed during
the significant computations described in Section 3 it was nearly always
witnessed that $\Delta_{G,\chi}$ was zero modulo one or two small
primes but not mod 0 and in many cases it was 
just zero mod 2). More seriously, although the above argument will work
on minors with few coefficients, the branching nature of this
approach means it will be extremely expensive for large minors (and indeed
large numbers of minors). Moreover we need to calculate all of the minors
but evaluating determinants over multivariable polynomial rings takes
considerably longer than in $\z[t^{\pm 1}]$ or $(\z/p\z)[t^{\pm 1}]$ 
so we will gain if we can work more in the latter rings.

We now describe a variation on the above in the special case $\beta_1(G)=2$.
This is the algorithm we implemented in Magma and we found it performed
quickly. It seemed that determinants of large matrices (say 12$\times$12
and larger) were much quicker to calculate over $\z[t^{\pm 1}]$ than
over multivariable polynomial rings so the following approach aims to
do as little of the latter as possible.

As in the case $b=1$ there is a similar identity between different
minors for general $b$.
We have $N_j^I(1-\alpha(x_i))=N_i^I(1-\alpha(x_j)$ where
for $1\leq i\leq n$ we have that $\alpha(x_i)$ and $N_i^I$ are 
elements of $\z[t_1^{\pm 1},\ldots ,t_b^{\pm 1}]$. This implies that
$N_j^I=\delta^I(1-\alpha(x_j))$ because any common factor of
$1-\alpha(x_1),\ldots ,1-\alpha(x_n)$ would remain so when evaluated under
all $\tilde\chi$. Consequently we build the Alexander matrix $B$ and in doing
so we record the vectors $v_1,\ldots ,v_n\in\z^2$ where 
$v_j=(a_j,b_j)$ for $\alpha(x_j)=x^{a_j}y^{b_j}\in\z[x^{\pm 1},y^{\pm 1}]$.

We first check the two special homomorphisms $\chi_x$ (and $\chi_y$)
which means that we are setting $x$ (and then $y$) equal to 1 in $B$
to form the evaluated matrix $B_x$ (or $B_y$).
We skip across the columns until we find a generator $x_j$ with the
second component of $v_j$ being non zero and delete the $j$th column
(note that if it were zero then any minor calculated with this column
removed would be zero anyway).

We then run through all possible choices of rows that make the
resulting matrix square when this choice is deleted. On taking our 
first choice and evaluating the
determinant, we look at and record the content $c_1$ of the resulting 
single variable polynomial. If $c_1=1$ then we can reject $\chi_x$ as a
homomorphism making all the minors vanish modulo some prime, so we move
on to $\chi_y$ and do the same. Otherwise calculate another determinant 
with the $j$th column still removed but a different selection of rows
and find the content $c_2$ of this. We then take $c=$gcd$(c_1,c_2)$ and
continue calculating minors along with their contents,  then we
update $c$ by taking the gcd of it and the content of the new minor. 
We stop calculating minors when the gcd becomes 1 and
reject $\chi_x$ (and then we try $\chi_y$)
or we may find on calculating the content of
all minors with the $j$th column removed that they have a common factor, 
in which case we have proved largeness.

At this point we have tried only two homomorphisms but we can now use a
form of Chinese Remainder Theorem to test the rest. Consider the
homomorphism $\chi$ sending $(x,y)$ to $(t^l,t^m)$ where gcd$(l,m)=1$.
The monomial (or point in $\z^2$) $x^ay^b$ is sent to $t^{la+mb}$ so in
order for any minor
$N(x,y)=\sum c_{a,b}x^ay^b$ to be 0 under evaluation, we need
each sum of the $c_{a,b}$s over $(a,b)$ such that $la+mb$ is constant to
be zero. If this happens for a particular $\chi$ and a prime $p$ divides
$m$ (so does not divide $l$) then the set of points $(a,b)$ making up
each sum is such that $a$ is constant mod $p$. This works for prime
powers $p^i$ too. Consequently we calculate the first full minor $N(x,y)$
such that $N(x,1)$ is not identically zero and
we will know from above which minor this is by looking in our list of
contents. We then consider the polynomial $P(x,y)$ which is the quotient
of $N(x,y)$ by $1-\alpha(x_j)$ where $j$ is the column that we deleted.

Then for each $p^i$ at most the degree of $P(x,y)$ as a polynomial
in $x$, we form the ``wrapped up polynomial'' $\sum c_{a,b}x^{\overline{a}}
y^b$ where $\overline{a}$ is $a$ mod $p$ (so it is
of degree at most $p^i-1$ in $x$) and we see whether this new
polynomial is zero.
We start with 2, then powers thereof, dropping out if a power fails this
test and moving on to the next prime. We then record in some set the
prime powers that pass as possible factors of $m$. The purpose of
first checking $\chi_y$ (and $\chi_x$) is that we now know $m\neq 0$ 
(and $l$) so
we will not have many possibilities for $m$. We next do the same for $l$
and together try out all of these $\chi$ to see if any of them work. This
can be done efficiently by evaluating the Alexander matrix under each potential
homomorphism so that the entries are in $\z[t^{\pm 1}]$. We then find a
column $j$ such that $1-\chi(x_j)$ is not zero in $\z[t^{\pm 1}]$ and
remove that column. We can then calculate the minors over all
choices of rows to delete and this can be done quickly as they are
single variable polynomials. We have largeness if all of them vanish.

We can also see if there exists a prime $p$ such that
all of the minors vanish modulo $p$ on evaluation 
under a particular $\chi$. To do this we can proceed as above, recording the
content $c_i$ of each of the single variable polynomials $N_i(x,1)$ (and
similarly for $N_i(1,y)$). We then take $N(x,y)$, where $N(x,1)$ has non zero
content, and form $P(x,y)$ as before. 
Next we create each wrapped up polynomial of 
$P(x,y)$ with respect to a prime power.
However we do not just look to see if this polynomial vanishes but if it has
content which is not equal to 1, in which case we record the prime power
and this content. We can then find as before candidates for $l$ and $m$ and 
check the appropriate homomorphisms to see if all minors vanish modulo some
integer under such a homomorphism by examining the contents of these single
variable polynomials and taking a gcd as we go along. However 
note that if the content $c$ of $N(x,1)$ is not equal to 1 then the same 
will be true for $N(x,y)$ and $P(x,y)$,
thus every prime power will pass the above test
modulo $c$ when we wrap up $P(x,y)$. 
We deal with this by removing from the contents of the wrapped up
polynomials any primes which
divide $c$. We then check the candidate homomorphisms $\chi$ obtained
from this process to see if all minors evaluated at $\chi$ are 0 mod $n$ 
for some $n$. If not then we finish by going back to the prime 
factors $r$ of $c$ and for
each one we calculate in full another minor $N'(x,y)$ such that $N'(x,1)$
has content coprime to $r$ (and the same for $y$ in place of $x$). 
Moreover we will know which minor to choose from our list of contents
$c_1,\ldots ,c_l$. We then run the process above to obtain candidate
homomorphisms but this time we work modulo $r$ throughout. 
Finally we have determined
whether there is a homomorphism $\chi$ and a prime $p$ such that all
minors evaluated at $\chi$ are 0 mod $p$.
 
We now present a variation for $b=\beta_1(G)\geq 3$ which is particularly
useful for showing that there is no homomorphism with zero Alexander
polynomial. For any prime $p$ we can ``box'' a given multivariable
polynomial minor 
\[N(t_1,\ldots ,t_b)=\sum_{{\bf v}\in\n^b}c_{\bf v}t_1^{v_1}\ldots 
t_b^{v_b}\]
where ${\bf v}=(v_1,\ldots ,v_b)$ by regarding the exponents $v_i$ as
integers mod $p$. If there is a homomorphism $\chi=(k_1,\ldots ,k_b)$
such that $N(t^{k_1},\ldots ,t^{k_b})$ is 0 (mod 0 or mod some $n$) then this
is still true for the ``boxed'' polynomial where we take the integers
$k_1,\ldots ,k_b$ mod $p$ as well as the sum of the exponents 
$k_1v_1+\ldots +k_bv_b$. Moreover the latter polynomial will vanish on
evaluation at $(k_1,\ldots ,k_b)$ if and only if it does so at
$(\lambda k_1,\ldots ,\lambda k_b)$ if $\lambda$ is invertible mod $p$.
Therefore for a suitable number of small primes $p$ we ``box'' $N$
mod $p$ to obtain $\overline{N}_p$ and we evaluate this under the 
$(p^b-1)/(p-1)$ equivalence classes of homomorphisms. We keep a record
of which of these homomorphisms make $\overline{N}_p$ vanish (we can choose
just to look mod 0, but it is better to keep further records 
as to which $n$ make
$\overline{N}_p$ equal to 0 mod $n$). If for a particular $p$ we find no
homomorphism makes $\overline{N}_p$ vanish then we can break immediately
and conclude that no Alexander polynomial of $G$ is zero. Otherwise we can
construct possible values for $(k_1,\ldots ,k_b)$ and try these out
directly by evaluating $f$ at $\chi$.

There is another way of eliminating $G$ without even calculating the
Alexander polynomial which we now summarise: for any surjective
homomorphism $\chi$ we set $K=$ ker $\chi$ so that we have the
cyclic covers $K\leq G_m\leq G$ with $[G:G_m]=m$. Then if $\Delta_{G,\chi}$
is 0 (mod 0 or mod $p$) we must have that the rank (over $\q$ or over
$\z/p\z$) of the abelianisation $G_m/G_m'$ is at least $m+1$ (as we have
already seen for $m=1$). As for adapting this to when $b\geq 2$, upon
taking the kernel $N$ of the natural map from $G$ to its free abelianisation
$G/N=\z^b$ we have for any prime $q$ that
$NG^q\leq G_q\leq G$ with $[G_q:NG^q]=q^{b-1}$ as 
$G/NG^q$ is isomorphic to $(\z/q\z)^b$. If we know the abelianisation of
$NG^q$ then we can use this information in the following way: if
$\beta_1(NG^q)\leq q$ then there is no $\chi$ with $\Delta_{G,\chi}=0$
mod 0. Moreover if $p$ is a prime not equal to $q$ 
such that there is a surjective
homomorphism from $NG^q$ to $(\z/p\z)^{q+1}$ then $G_q$ having $p$-rank
at least $q+1$ implies that $NG^q$ does too as the index is coprime to
$q$ (whereas if $p=q$ then we can only conclude that $NG^p$ has $p$-rank
at least $p-b+2$ by considering the vector space $G_p/(G_p)'(G_p)^p$).
Thus at each stage we have a possible value $n$ for $\Delta_{G,\chi}$ being
0 mod $n$, and on taking the next prime $q$ and calculating the
abelianisation of $NG^q$ we can alter $n$ according to the above,
breaking and rejecting $G$ if $n$ reaches 1 whereas if $n$ survives for
several primes we can be reasonably sure that there will exist $\chi$
with $\Delta_{G,\chi}$ 0 mod $n$. This is because if there is no prime
$p$ for which $\Delta_{G,\chi}$ is zero mod $p$ then there is a bound
$B$ such that for all $m$ and all primes $p$ the $p$-rank of the
abelianisation of $G_m$ is at most $B$.

In fact Magma does not have a command that, given a finite presentation
of $G$, directly finds $NG^q$ but it does give $G'G^q$, so we can use
that instead and alter the expected rank accordingly, or we can obtain
$NG^q$ from $G'G^q$ by creating the homomorphism from $G$ to $G/NG^q$ via
$G/G'G^q$ and taking its kernel, or if $G/G'=\z^b\times T$ where $T$ is the
torsion subgroup then we can just run our check using primes $q$ which do not
divide $|T|$ as then the above bounds for the 
ranks of the abelianisation of $NG^q$
and $G'G^q$ will be equal. However we remark that for, say, $q=11$ and
$b=4$ we will be requiring the abelianisation of a subgroup $NG^q$ of $G$
having index over 10,000 (or if we are using $G'G^q$ then this is already
the case when $G/G'$ is, say, 
$\z^2\times(\z/11\z)^2$) and this can take a while to
find, in which case we would be better off going straight for 
the minors anyway. We remark that in no case did we witness a
delay in calculating the Alexander matrix; instead it was evaluating large
determinants that could take some time. 

Finally we note that when we talk about the algorithm being fast or slow
then we are referring throughout to practical rather than theoretical
running time. Indeed if it is the case that the property of largeness
is not algorithmically solvable amongst finitely presented groups (as
seems to be the prevailing view) then the theoretical running time for
any partial algorithm must be an uncomputable function, as otherwise we
could wait until the predicted number of steps to prove largeness had
occurred, and then conclude that the inputted group was not large. 

\section{2 generator 1 relator presentations}

\subsection{Presentations in standard form}

The subclass of finitely presented groups which have a 1 relator
presentation has been much studied. If this presentation has at least
3 generators then it is well known that the group is large, so it is
only 2 generator 1 relator presentations that are in doubt. We shall
shortly mention the presentations of this form which are known not to yield
large groups but we presented theoretical results in \cite{meis} and
\cite{megg} suggesting that a 2 generator 1 relator presentation is very
often large, and here we will present strong experimental evidence.

Given any 2 generator 1 relator
presentation $\langle x,y|w(x,y)\rangle$, we do of course get the same group
if we take a conjugate of $w$ or $w^{-1}$. However there are many other
presentations defining the same group, so to avoid dealing with this we will
only consider presentations in what we call Magnus form. 
This is based on the fact
that there is an automorphism $\alpha$ of $F_2$, with let us say
$\alpha(x)=a$ and $\alpha(y)=t$,
such that $w(\alpha^{-1}(a),\alpha^{-1}(t))$ has exponent sum 0 in $t$
when written as a word $w'$ in $a$ and $t$. We then say that $w'$ is in
Magnus form with respect to $t$. This is a consequence of the fact that
the kernel of the natural map from Aut($F_2$) to Aut($F_2/F_2')\cong
GL(2,\z)$ is the group of inner automorphisms of $F_2$.

The elementary Nielsen moves on an ordered pair of group elements
$(g_1,g_2)\in G\times G$ are: swapping the pair, replacing either with its
inverse and replacing $g_1$ with $g_1g_2$ or $g_2$ with $g_2g_1$. These
moves, when regarded as elements of $Aut(F_2)$ by their effect on the
standard basis $(x,y)$, generate Aut($F_2$). We say that two pairs are
Nielsen equivalent if there is a finite sequence of elementary Nielsen
moves taking one to the other, so in $F_2$ the equivalence class of $(x,y)$
consists precisely of all generating pairs.

If the group $G$ is given by a presentation $\langle a,t|w(a,t)\rangle$
with $w$ cyclically reduced and in Magnus form with respect to $t$ then
we can keep $w$ in standard form by replacing $t$ with $ta^k$ for any
$k\in\z$ (or sending $t$ or $a$ to their respective inverses), but
$Out(F_2)$ being isomorphic to $GL(2,\z)$ implies that these are the only
automorphisms of $F_2$ we can make up to conjugation which preserve
Magnus form (at least if $\beta_1(G)=1$: if $\beta_1(G)=2$ then every
2 generator 1 relator presentation of $G$ is in Magnus form with respect
to both generators). However, as we shall see later, there exist
2 generator 1 relator groups with more than one Nielsen equivalence class
of generating pairs. 

Therefore given any 2 generator 1 relator presentation, we can assume $w$
is of the form
\begin{equation}t^{k_1}a^{l_1}\ldots t^{k_n}a^{l_n}  
\end{equation}
where $k_i,l_i\neq 0$ and $k_1+\ldots +k_n=0$ (excluding the words $a^l$).
Thus writing $a_i=t^iat^{-i}$ we have
\[w=a_{s_1}^{l_1}a_{s_2}^{l_2}\ldots a_{s_n}^{l_n}\qquad\mbox{ where }
s_1=k_1\mbox{ and } s_{i+1}=s_i+k_{i+1}.\]
We call $h=\,$max$(s_i)-$min$(s_i)$ the height of $w$ when in Magnus
form with respect to $t$ and $2n$ the syllable length of $w$.
Note that the moves above preserving Magnus
form also preserve the height and the syllable length, 
though not necessarily the word length, of $w$. 

The presentations which are known not to be large all fall into two
types: first if the syllable length is 4 with $k_1=-k_2=1$ then we have the
Baumslag-Solitar group $BS(l_1,-l_2)$ and it is well known that this is not 
large if and only if $l_1$ and $l_2$ are not coprime. 
The other type comes from \cite{megg} Theorem 4.3 
(based originally on a construction by Higman in \cite{hg}; see also
\cite{bmgg}) 
which states that if $g$ and $h$ are
conjugate elements of a group $G$ where the relation $h^kg^lh^{-k}=
g^{l\pm 1}$ holds in all finite images of $G$ then $g$ and $h$ must be
trivial in any finite image of $G$. In particular if 
$G=\langle a,t|b^ka^lb^{-k}=a^{l\pm 1}\rangle$ where $b$ is an element of
$F_2$ which is conjugate to $a$ then the presentation is in Magnus form
with respect to $t$ but all finite images are cyclic so $G$ is not large.
Taking $k=l=1$ and the plus sign with $b=tat^{-1}$, we obtain the famous
group first introduced by Baumslag in \cite{baum}. We shall refer to this
ubiquitous group as the Baumslag-Brunner-Gersten group $BBG$. Note that
further examples can be obtained by iterating this construction because
if $G=\langle a,t|w\rangle$ is a presentation where $a$ is trivial in
every finite image of $G$ then we can take $w$ and a conjugate of $w$
to form a new relator where $w$ is trivial in every finite image of
this new group, thus $a$ is too.

We also remark that these non-large groups (as well as some others)
typically have unusual
presentations not Nielsen equivalent to the well known ones. This
observation dates back to \cite{mccpe} and is based on the following
trick: if $G=\langle x,y|w(x,y)\rangle$ is such that we can write
$w$ in the form $u(x,y)^k=y$ then by introducing the letter $z=u(x,y)$
we have that $G$ is also $\langle x,z|z=u(x,z^k)\rangle$ which for
$k\neq 0,\pm 1$ is generally not Nielsen equivalent. For instance
taking the presentation $\langle x,y|(xy^rx^{-1}y^{\pm r})^k=y\rangle$
we get $\langle x,z|xz^{kr}x^{-1}z^{\pm kr}=z\rangle$ so we have an
alternative presentation for $BS(kr,\mp kr+1)$. These are in Magnus
form with respect to $x$ but have longer syllable length than the usual
presentation so cannot be Nielsen equivalent. Also it was shown in
\cite{brun} that for any $s\geq 1$
\[\langle a,t|(ta^{2^s}t^{-1})a(ta^{2^s}t^{-1})^{-1}=a^2\rangle\]
is isomorphic to $BBG$ by putting $b=a^2$ so that 
$a=(tb^{2^{s-1}}t^{-1})^{-1}b(tb^{2^{s-1}}t^{-1})$, 
thus giving the same relation
with $b$ and $s-1$ in place of $a$ and $s$.

This is also one of a family of examples in \cite{brunon}:
\[D(k,l,m)=\langle a,t|(ta^kt^{-1})a^l(ta^kt^{-1})^{-1}=a^m\rangle\]
so that $BBG\cong D(2^s,1,2)$ for $s\geq 0$. Note that $D(k,l,m)$ is large
by the Alexander polynomial if $|l-m|\neq 1$ and has only finite cyclic
quotients otherwise by the above. Also for the sake of the tables later,
we point out that not only is $D(k,l,m)$ isomorphic to $D(-k,-l,-m)$ by
sending $a$ to $a^{-1}$ but also to $D(-k,l,m)$ by further inverting
both sides of the relation. Therefore we can assume that $k$ and $l$ are
always positive. 

\subsection{Height 1 presentations}

Let us suppose that  
the height of a presentation in the form (1) is 1, so that $n$ is even and
(by sending $t$ to $t^{-1}$ if necessary) 
$k_i$ is 1 for $i$ odd and $-1$ for $i$
even. Here we do not need anything as involved as Section 2 to determine
largeness because \cite{megg} Corollary 4.2 tells us that $G$ is large
if and only if there is $H\leq_f G$ with $d(H/H')\geq 3$, where $d$ is the
minimum number of generators for a group. Therefore we merely need to
compute for each $i\geq 2$ the index $i$ subgroups of $G$ and their
respective abelianisations, breaking as soon as one is found needing at
least 3 generators. When we have done this for reasonably high $i$, we
can look at which presentations are left and ask whether we already
recognise them as groups known not to be large.
To do this efficiently in such a way that we
are not repeatedly taking the same group with many different presentations
but without obsessively demanding only one presentation per group we adopt
the following: first note that for height 1 presentations the move
$t\mapsto ta^k$ does not change (1). Then by taking cyclic permutations
and sending $a$ to $a^{-1}$ if necessary, we can assume that $l_1>0$ and
$l_1\geq |l_i|$ for all $i$. We can further arrange it by reversing the
word that $l_2>l_n$, or if equal that $l_3>l_{n-1}$ and so on. We then
choose an upper bound for the total word length we will consider. In our
case it was 14, not so much because of lack of computer power but by
human limitation on the numbers of left over presentations to be
inspected. Then for syllable length $2n$ (starting at $n=4$ because 
$n=2$ only
yields Baumslag-Solitar groups) and a fixed $l_1$ (starting at 1) we have an
upper bound $b$ on the moduli of the other $l_i$ (so that $b$ will equal
$l_1$, or less if that value always makes the word too long), so we
take all presentations where the values of $(l_2,\ldots ,l_n)$ are
counted (ignoring zeros)
from $(-b,\ldots ,-b)$ to $(b,\ldots ,b)$ and we input them if
the word length is at most 14. We then choose a bound for the index of the
subgroups we examine which is as high as possible without long delays in
finding all subgroups of this index: as some groups will be proved large
well before this we went up to index 12, which with these presentations
usually only took a few seconds.

This allows us to draw up an initial list of what might fail to be large.
The policy adopted from then on was as follows: first find the
Alexander polynomials and try to show that the group is $BS(m,n)$ or
$D(k,l,m)$ by use of the transformations above. If this failed then it
is also a consequence of \cite{megg} Section 4 that if $G$ is given by
a height 1 presentation but is not large then the finite residual $R_G$ is
equal to $G''$. If $G$ has the same Alexander polynomial as $B=BS(m,n)$
then, as $R_B=B''$ too with $G/G''$ isomorphic to $B/B''$, any finite index
subgroup of $B/B''$ is also one of $G/G''$ and hence corresponds under the
inverse image to a unique one of $G$ and of $B$ too. Moreover these subgroups
will have the same abelianisation throughout, as if $H\leq_f G$ then
$H''\geq R_H=R_G=G''\geq H''$ so $H''=G''$, meaning that the abelianisation
of the corresponding subgroup $H/G''$ of $G/G''$ is $(H/G'')/(H'G''/G'')
\cong H/H'G''=H/H'$ which is equal to that of $H$. Also the same holds
for finite index subgroups of $B$, so we compare the abelianisations of 
subgroups of $G$ and of $B$ up to index 12, and even though it may be that
all such subgroups $H$ of $G$ have $d(H/H')=2$, the torsion of $H/H'$ might
have higher order than the torsion of the abelianisation of the 
equivalent subgroup of $B$. This worked for most
of the remaining presentations, with for some reason index 7 subgroups
very often successful.

The upshot was that this process left only four presentations unknown as
to whether they were large or not. Table 1 lists all height 1 presentations
of length up to 14 giving rise to non large groups, in order of syllable
length (starting at 4) and then word length, along with these four.
Two we could resolve: they were (using
capital letters for inverses)
\[G_1=\langle a,t|ta^3Ta^2ta^3TA\rangle\mbox{ and }
  G_2=\langle a,t|ta^3Tata^3TA^2\rangle.\]
Their Alexander polynomials are the same as for $BS(1,\mp 6)$ but they
cannot be these groups as $BS(1,m)$ has only one Nielsen equivalence class
of generators. This is because the latter groups are soluble, whereas a
2 generator 1 relator presentation in Magnus form of height $h$ implies
that the group must contain a free subgroup of rank $h$. However they are also
ascending HNN extensions (with base $\z$) and this means that a 2 generator
1 relator presentation of $BS(1,m)$ in Magnus form must have a unique
maximum, but the word can only be of height 1. Therefore
working on the principle that these groups are large (because if not then
we have no idea what groups they are or how we would prove them not to be
large), we looked for non-abelian finite simple images. The computer found
that $G_1$ mapped onto the unitary group $U(3,3)$ of order 6048 and $G_2$
has the sporadic simple group $J_2$ of order 604800 as a finite image.
This means that we do not have $R_{G_i}=G_i''$ so $G_1$ and $G_2$ are large
by \cite{megg} Theorem 4.1.

However that still leaves two groups whose Alexander polynomial is 1,
so largeness is determined by the answer to the following:
\begin{qu} For the groups with presentations
\[
\langle a,t|ta^2TatATatATA\rangle\mbox{ and }
\langle a,t|ta^2TatATAtATA\rangle,\]
is it true that $a$ is trivial in every finite image?
\end{qu}

Consequently we have established that the vast majority of 2 generator
1 relator height 1 presentations with short word length define large
groups. However the only way we have determined that some groups are not
large was to show them to be isomorphic to groups already known to have
this property and there seems to be a very limited number of methods
which are able to prove that presentations of this type are not large.

\subsection{General height presentations}

We now consider 2 generator 1 relator presentations $\langle a,t|w(a,t)
\rangle$ in standard form with respect to $t$ but with height greater
than 1. Here we need to implement using Magma the algorithm described
in Section 2. Before we embark on running it, we note that not all
presentations of this type define large groups. The only example we
know of already in the literature is
\[\langle a,t|(t^kat^{-k})a(t^kat^{-k})^{-1}=a^2\rangle\]
in \cite{mol?} which has height $k$. However as $t^k$ can be replaced
here by any word in $F_2$, it is clear that there are a lot more such
presentations. Nevertheless it seems remarkable that we will not
encounter any non-large presentations in this subsection of word length
at most 12, given that the above example provides one of length 13.

We will have to work much harder than in the height 1 case. Although
word length at most 14 and subgroup index up to 12 was the starting
point for this work, too many presentations of length 13 were left
over to be dealt with by hand. These would then need to be checked further
by increasing the index, but finding the subgroups of index 15 can already
take 
a fair while with such a presentation. The other problem is that
rather than just requiring the abelianisation of these finite index
subgroups, we need to rewrite to get a finite presentation. Although this
was not generally a concern, it began to fail to complete
in a suitable time
on a few of the length 13 and 14 cases. However an important point which
makes these presentations tractable is that although a finite index
subgroup of a 2 generator 1 relator group will not usually have such
a presentation itself, it will still have a deficiency 1 presentation
so that for each subgroup only one minor need be calculated.

This time we assumed that, given a word $w$ in the form of (1), the
exponents of the letter $t$ were such that $k_1\geq |k_i|$ for $i\leq 2\leq n$.
We can assume that $k_1\geq 2$ because if all $k_i=\pm 1$ but $w$
does not have height 1 then there must be a subword (where $w$ is regarded
as a cyclic word) of the form $ta^lt$ (or $t^{-1}a^lt^{-1}$).
We can then apply the transformation $t\mapsto ta^{-l}$ 
(or $t\mapsto ta^l$) to obtain a subword of the
form $t^{\pm 2}$ whilst still keeping $w$ in Magnus form, but this could
increase the word length of $w$. To get round this, observe that subwords
of the form $ta^lt^{-1}$ or $t^{-1}a^lt$ are unchanged by this process,
whereas there is a pairing in which every subword of the form $ta^lt$ in $w$ 
is placed with one of the form $t^{-1}a^{l'}t^{-1}$. 
If we take the $l$ of smallest modulus appearing in
any subwords of $w$ in either of these latter two forms 
then the transformation $t\mapsto
ta^{-l}$ in the first case and $t\mapsto ta^{l'}$ in the second case
will obtain a subword of the form $t^2$. Let us assume that it is
$ta^lt$ with $l>0$. If the word length of $w$ is increased by this
substitution then it must be that the pairs of subwords of the form
$ta^mt$, $t^{-1}a^nt^{-1}$ for $m>0$ and $n<0$ (where the combined length
increases under $t\mapsto ta^l$) outnumber the pairs where $m<0$ and $n>0$
(where we decrease). Consequently the move $t\mapsto ta^{-l}$ will reverse 
this situation and we will decrease the total word length of $w$. Although
we may still not have a proper power of $t$ in $w$, we can repeat this process
until we do.

For a given $n$ we then fixed a $t$-shape, which we define to be the
vector $(k_1,\ldots ,k_n)$, and we had a bound $b$ such that if any
exponent $l_i$ of the letter $a$ had modulus greater than $b$ then the word
was too long. We then inputted all $a$-shapes $(l_1,\ldots ,l_n)$ from
$(-b,\ldots ,-b)$ to $(b,\ldots ,b)$ if the resulting word length was
at most 12. Note that because we can send $a$ to $a^{-1}$ in a presentation,
each group has been inputted at least twice. This is no bad thing because
it acts as a check, and given the method we were using it could happen
that one presentation was proved large whereas the other was missed.
This anomaly can only come about on subgroups with first Betti
number greater than 2 because we did not initially
bother incorporating
 a full working algorithm for subgroups $H\leq_f G$ with $\beta_1(H)\geq 3$ 
in the run owing to their
comparative rarity. In fact the approach adopted when working through
the lists of presentations was that of Section 2
for first Betti numbers 1 and 2, whereas for $\beta_1(H)\geq 4$ we only
looked at the minors $N(1,\ldots ,1,t_j,1,\ldots ,1)$ evaluated at just
one variable for each $j$ between 1 and $\beta_1(H)$.
For $\beta_1(H)=3$ we at least considered the double
variable polynomials $N(x,y,1)$, $N(x,1,z)$ and $N(1,y,z)$ and fed
these to the $\beta_1(H)=2$ routine. 

The initial results when run up to index 12 were that there was one
remaining presentation not proved large of length 9, 4 of length 10,
less than 30 of length 11 and none of length 12. We then ran the program
on these remaining presentations on subgroups of index 13, 14 and 15.
It would take about a minute to obtain all the subgroups of index 15 and
the Alexander polynomial checks would then be instant. In one case at
index 15 it failed to rewrite for a subgroup presentation, but we then
ran the abelianised version at the end of Section 2 which eliminated
this case. 

We were then only left with the one length 9 presentation and 15 length 11
presentations as given in Table 2. We now need to
find other ways of showing that these groups are large. We can use the
Magma command LowIndexNormalSubgroups which will be able to find normal
subgroups up to a higher index than that for arbitrary subgroups, but there
will be less of them. The main tool we use is a form of bootstrapping:
pick a low index subgroup $H$ with decent abelianisation, by which we usually
mean $\beta_1(H)$ at least 2 or (though preferably and) $d(H/H')\geq 3$. Then
rewrite to get a finite presentation for $H$ and regard this as our input.
Although the presentation will get longer so that we will be unable to find
all subgroups up to index 15 again, a subgroup $L$ of $H$ will have index
$[G:H][H:L]$ in $G$ so we can go considerably higher by being selective. We
can even repeat the process until the calculations become too long.
At this point we also developed as a separate
routine the algorithm in Section 2 for $\beta_1(H)$ at least 3 and this
was able to prove largeness when applied to an appropriate subgroup $H$
of the very few groups left over.
 
Let us illustrate using the solitary presentation of length
9 which was left over, as this turns out to be an interesting group that
has already appeared in the literature. The presentation is
$\langle a,t|t^3at^{-2}a^{-1}t^{-1}a^{-1}\rangle$ and the group $\Gamma$
it defines is 
free by cyclic as its homomorphism onto $\z$ has kernel the free group
$F_3$ of rank 3. This can be seen by setting $b=tat^{-1}$ and $c=tbt^{-1}$
so that we have an alternative presentation of the form
\[\Gamma=\langle a,b,c,t| tat^{-1}=b, tbt^{-1}=c, tct^{-1}=ab\rangle\]
which happily is the subject of the paper \cite{sta} by Stallings
where it is shown that the corresponding automorphism $\phi$ of $F_3$
is not topologically realisable as a homeomorphism of a compact surface
with boundary. This was followed up in the paper \cite{gerst} where
conditions were given to ensure that all positive
powers of an automorphism
$\alpha$ of the free group $F_n$ are irreducible, 
where $\alpha$ is said
to be reducible if there exist proper non-trivial free factors $R_1,
\ldots ,R_k$ of $F_n$ such that the conjugacy classes of $R_1,\ldots
,R_k$ are permuted transitively by $\alpha$ (see \cite{bh}). In particular
it was shown that all positive powers of $\phi$ are irreducible, 
and furthermore that for $k\geq 1$ no power $\phi^k$ can fix a 
non-trivial word, even up to conjugacy. 

As for largeness of groups $G=F_n\rtimes_\alpha \z$ for $n
\geq 2$, this is true by \cite{meis} if $G$ contains $\z\times\z$, which
is equivalent to there being a non-trivial element $w$ in $F_n$ and $k\geq 1$
such that $w$ is sent by $\alpha^k$ to a conjugate of itself. It is also
equivalent to a group of this form not being word hyperbolic. Thus our
group $\Gamma$ above is word hyperbolic. However we can have cases
where $G$ is not word hyperbolic but all powers of $\alpha$ are
irreducible, such as if $\alpha$ is an automorphism of 
$F_2=\langle x,y\rangle$ then $\alpha^2$ will always fix the
conjugacy class of $xyx^{-1}y^{-1}$ but $\alpha^k$ need not fix a generator
of $F_2$ for any $k$. We can also have $G$ being word hyperbolic but
$\alpha$ reducible; indeed in \cite{megg} the first example of a large
word hyperbolic group of the form $F_n\rtimes_\alpha \z$ was given by
putting together two copies of Stallings' automorphism $\phi$. However
here we can give the first example of something rather stronger.
\begin{thm} 
The word hyperbolic group $\Gamma=F_3\rtimes_\phi\z$ above is large
even though $\phi^k$ is irreducible for all $k\geq 1$.
\end{thm}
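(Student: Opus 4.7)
The plan is to split the theorem into three assertions and dispatch the first two by citation, concentrating the real work on largeness. The hyperbolicity and irreducibility claims both reduce to Gersten's analysis of $\phi$ in \cite{gerst}: that paper shows every positive power $\phi^k$ is irreducible as an automorphism of $F_3$ and that no such $\phi^k$ fixes a non-trivial conjugacy class. The criterion recalled just above the theorem then gives word-hyperbolicity: $F_n\rtimes_\alpha\z$ contains a $\z\times\z$ subgroup iff some positive power of $\alpha$ preserves a conjugacy class of a non-trivial element, and for mapping tori of $F_n$-automorphisms the absence of $\z\times\z$ is equivalent to word hyperbolicity.

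For largeness I would try to apply the algorithm of Section 2 directly to $\Gamma$, but this must fail at the top level: $\beta_1(\Gamma)=1$ with a unique $\chi$ up to sign, and $\Delta_{\Gamma,\chi}$ is (up to units) the characteristic polynomial of $\phi$ acting on $H_1(F_3;\z)\cong\z^3$, a monic integer polynomial which cannot be zero modulo any prime. Hence we must pass to finite index subgroups. The plan is the bootstrapping strategy described in Section 3.3: use Magma's low-index subgroup routines to list subgroups $H\leq_f\Gamma$ of small index, retain those with $\beta_1(H)\geq 2$ or $d(H/H')\geq 3$, rewrite each to a deficiency 1 presentation via Reidemeister--Schreier (which remains deficiency 1 because $\Gamma$ does), and apply the $b\geq 2$ branch of the Section 2 algorithm to search for a $\chi:H\twoheadrightarrow\z$ and a prime $p$ (possibly $p=0$) with $\Delta_{H,\chi}\equiv 0\bmod p$.

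If no such $(H,\chi,p)$ appears within the first tier of subgroups, iterate: pick the $H$ with the richest abelianisation, treat its rewritten presentation as a fresh input, and repeat the subgroup search inside $H$; a subgroup of index $n$ in $H$ has index $[\Gamma:H]\cdot n$ in $\Gamma$, so this lets us probe much higher indices in $\Gamma$ than a direct enumeration would permit. Once any such $(H,\chi,p)$ is found, \cite{meis} Theorem 2.1 applied to $H$ gives that $H$ is large, and largeness passes to any overgroup of finite index, so $\Gamma$ is large.

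The main obstacle is locating a suitable $H$: because $\phi$ and all its powers are irreducible and fix no conjugacy classes, the subgroup structure of $\Gamma$ is comparatively rigid, so first Betti number $\geq 2$ is unlikely to appear at very small index, and vanishing of the Alexander polynomial modulo some prime may only occur after one or two further descents. The redeeming feature is that every intermediate subgroup encountered inherits a deficiency 1 presentation, so each Alexander matrix is $(n-1)\times n$ and only the one minor discussed at the end of Section 2 needs to be computed per subgroup, keeping the recursion practically feasible.
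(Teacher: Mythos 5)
Your plan matches the paper's proof exactly: hyperbolicity and the irreducibility of all powers $\phi^k$ are quoted from Gersten--Stallings, the top-level Alexander polynomial of $\Gamma$ is correctly dismissed as monic (hence never vanishing), and largeness is then sought by the Section~2 algorithm combined with the bootstrapping of Section~3.3. The only thing missing is the concrete computational certificate that constitutes the actual proof, which you of course cannot exhibit blind: the paper finds an index-7 subgroup $H\leq\Gamma$ with $H/H'\cong (C_2)^3\times\z$ (caught by your $d(H/H')\geq 3$ filter), then a normal subgroup $L\trianglelefteq H$ of index 8 with abelianisation $(C_2)^2\times(C_4)^3\times\z^4$, rewrites to a deficiency-1 presentation of $L$ (index 56 in $\Gamma$), and computes the four-variable Alexander polynomial $\Delta_L(t,u,v,w)$ whose specialisation $\Delta_L(1,1,v,1)=48v^{27}-256v^{26}+\cdots+48v^{19}$ vanishes mod 2 — your search procedure, run in Magma, would produce exactly this witness.
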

\begin{proof}
From the above we just need to establish the largeness of $\Gamma$ and this
is done purely computationally using the Magma program described in Section
2. On looking for the abelianisations of the low index subgroups of $\Gamma$
(these are strictly speaking low index subgroups up to conjugacy, which
helps us as we avoid duplication of non normal subgroups),
the only promising avenue is the first subgroup $H$ of index 7, with
abelianisation $C_2\times C_2\times C_2\times\z$. We obtain a presentation
for $H$ by rewriting using the presentation for $\Gamma$ and then use the
command LowIndexNormalSubgroups on $H$ up to index 8. We find 191 of these
subgroups, with number 121 having index exactly 8 and abelianisation
$L/L'=(C_2)^2\times(C_4)^3\times\z^4$. On again rewriting, this time for
$L$ in $H$ (thus obtaining 9 generators and 8 relators for an index 56
subgroup of $\Gamma$), we then apply our
routine to the resulting presentation of $L$. The Alexander polynomial
$\Delta_L(t,u,v,w)$ occupies 10 sides of printout and would surely not be
calculable without the computer but it also determines that
$\Delta_L(1,1,v,1)$ is equal to
\[48v^{27}-256v^{26}+576v^{25}-768v^{24}+800v^{23}-768v^{22}
+576v^{21}-256v^{20}+48v^{19}\]
which is of course 0 mod 2 so we have established that some finite index
subgroup of $L$, and hence a subgroup of $\Gamma$ with index a multiple of
56, surjects onto a non abelian free group.
\end{proof}

Note that although running through all the subgroups calculated in the lists
above until the right one is found could have running time of an hour, once
we know where to look we can check our answer in a minute or two. 
We give a summary of arguments which established largeness for the other 
15 presentations in the Appendix. Consequently we have:
\begin{thm}
If $G$ is given by a 2 generator 1 relator presentation 
$\langle x,y|r\rangle$ in Magnus form and $r$ is cyclically reduced
with length at
most 12 then $G$ is not large if and only if either the presentation appears
in Table 1 (up to cyclic permutation of $r$ or $r^{-1}$ and replacing
either generator by its inverse)
or the presentation is of the form 
$\langle a,t|ta^pt^{-1}=a^q\rangle$ where $p$ and $q$ are coprime.
\end{thm}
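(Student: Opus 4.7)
The plan is to combine a systematic enumeration with the computational machinery already developed. First, I would normalize: by Section 3.1 every 2-generator 1-relator presentation is Nielsen-equivalent to one in Magnus form with respect to $t$, and within Magnus form we may apply the moves $t\mapsto ta^k$, swapping $a\leftrightarrow a^{-1}$, $t\leftrightarrow t^{-1}$, cyclic permutation, and inversion of $r$. For height $\geq 2$ presentations I would further use the reduction at the start of Section 3.3 to assume $k_1\geq 2$ and $k_1\geq |k_i|$, while for the exceptional height 1 case the normalization at the end of Section 3.2 applies. This yields a finite, computer-enumerable list of candidate presentations with word length at most 12.

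Next, I would split along $h=$ height. The pure Baumslag–Solitar case (syllable length $4$, $k_1=-k_2=1$) must be separated out at the start because these give the infinite family $\langle a,t\mid ta^pt^{-1}=a^q\rangle$ which is non-large precisely when $\gcd(p,q)>1$; the coprime case is the only infinite subfamily of non-large groups appearing in the statement, and it is what the ``or'' clause accounts for. For height $1$ I would invoke Section 3.2: run through the enumerated presentations, compute abelianizations of all index $\leq 12$ subgroups and apply \cite{megg} Corollary 4.2 to flag largeness whenever some $d(H/H')\geq 3$; anything left over gets matched against the known non-large families $BS(m,n)$ and $D(k,l,m)$ via the $G/G''$ comparison trick, and the rest (like $G_1,G_2$) are shown large by exhibiting non-abelian finite simple quotients.

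For height $\geq 2$ and word length $\leq 12$, I would run the Section 2 algorithm on every finite index subgroup up to index $12$, then extend to index $13$, $14$, $15$ on the residual presentations, using the $\beta_1=1,2$ routines as the workhorses and the $\beta_1\geq 3$ boxed-polynomial variant when needed. This is exactly the procedure Section 3.3 reports: it clears everything except one length $9$ and fifteen length $11$ presentations listed in Table 2. For these sixteen stubborn cases I would use the bootstrapping technique: pick a small-index subgroup $H$ with large abelianization, rewrite via Reidemeister–Schreier to get a deficiency $1$ presentation of $H$, then run \texttt{LowIndexNormalSubgroups} on $H$ to reach much deeper subgroups than would be accessible directly from $G$, and apply the algorithm there. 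The Stallings example $\Gamma$ already demonstrates this method, with the residual $15$ cases handled analogously (deferred to the Appendix).

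The hard part will not be any single step in the reasoning but rather (a) establishing that the enumeration is complete and non-redundant under the normalization, so that no genuinely non-large presentation of length $\leq 12$ slips through unnoticed under a disguised Magnus form, and (b) showing largeness for the handful of stubborn presentations in Table 2 where index $\leq 15$ is insufficient and one must choose the right subgroup $H$ to bootstrap from; this is a search problem without a guaranteed theoretical bound on how deep one must go. The coprime Baumslag–Solitar family is the one part of the statement that is not a finite check but follows from the known fact that $BS(p,q)$ with $\gcd(p,q)=1$ is not large.
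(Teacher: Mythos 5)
Your proposal matches the paper's approach: the theorem carries no self-contained proof but is the summary statement of the computational work in Sections 3.2--3.3 and the Appendix, and you reproduce that pipeline correctly --- normalization to Magnus form with the length-non-increasing reduction of Section 3.3 forcing $k_1\geq 2$ or height $1$, separation of the syllable-length-$4$ Baumslag--Solitar family, the $d(H/H')\geq 3$ criterion of \cite{megg} Corollary 4.2 for height $1$, the Section 2 Alexander-polynomial algorithm run on subgroups up to index $15$ for higher heights, and bootstrapping via \texttt{LowIndexNormalSubgroups} for the sixteen residual Table 2 entries. You also correctly flag that the only non-computational ingredient is the Baumslag--Solitar family and that the stubborn Table 2 cases are a search problem with no a priori depth bound.

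One slip to fix: in your second paragraph you write that $\langle a,t\mid ta^pt^{-1}=a^q\rangle$ is ``non-large precisely when $\gcd(p,q)>1$''. This is backwards. If $d=\gcd(p,q)\geq 2$ then the group surjects onto $(\z/d\z)*\z$, which is large; it is the \emph{coprime} case that is non-large, as the theorem statement, the entries of Table 1, and your own closing paragraph all correctly record. (The prose of Section 3.1, ``not large if and only if $l_1$ and $l_2$ are not coprime,'' appears to carry a matching stray negation, so the confusion is understandable, but the theorem and table follow the correct convention.)
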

This also gives the full picture for the virtual first Betti number of
these groups:
\begin{co}
If $G$ is as in Theorem 3.3 then $G$ has infinite virtual first Betti
number unless the presentation appears
in Table 1 (up to cyclic permutation of $r$ or $r^{-1}$ and replacing
either generator by its inverse) in which case the virtual first Betti
number is 1, or the presentation is of the form 
$\langle a,t|ta^pt^{-1}=a^q\rangle$ where $p$ and $q$ are coprime, in which
case the virtual first Betti number is 2 for $p,q=\pm 1$ and 1 otherwise.
\end{co}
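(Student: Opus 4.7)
The plan is to read the corollary off from Theorem 3.3 by computing the virtual first Betti number $\mathrm{vb}_1$ for each non-large presentation in turn; the remaining ``large'' case needs no work since largeness implies $\mathrm{vb}_1=\infty$, as recalled in the introduction.

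For the Baumslag--Solitar presentations $\langle a,t\mid ta^pt^{-1}=a^q\rangle$ with $\gcd(p,q)=1$, I split on whether $|p|=|q|=1$. When this holds, $G$ is $\z\times\z$ (for $p=q$) or the Klein bottle group (for $p=-q$), both virtually $\z\times\z$, so $\mathrm{vb}_1=2$. In the remaining coprime cases $|p|\neq|q|$, so $BS(p,q)$ is an HNN extension of $\z$ over $\z$ in which the two edge-inclusions have distinct absolute degrees. Any finite-index subgroup $H$ inherits, via its action on the Bass--Serre tree, a finite graph-of-groups decomposition with infinite cyclic vertex and edge groups in which the unbalanced-degree property propagates, and a Mayer--Vietoris calculation of $H_1(H;\q)$ then yields $\beta_1(H;\q)=1$. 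Hence $\mathrm{vb}_1(G)=1$ in this subcase.

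For the Table 1 presentations, Section 3.2 has already identified each such $G$ with either a Baumslag--Solitar group $BS(m,n)$ with $\gcd(m,n)>1$, or a group $D(k,l,m)$ with $|l-m|=1$; each is height 1 and non-large, so by the theory of \cite{megg} quoted there $R_G=G''$. Consequently every $H\leq_f G$ satisfies $H''=R_H=R_G=G''$, so $\beta_1(H)=\beta_1(H/G'')$ depends only on the metabelian quotient $G/G''$. It therefore suffices to check that each of the finitely many metabelian quotients $G/G''$ appearing has $\mathrm{vb}_1=1$, which follows for the $BS$ family from a Bass--Serre computation as above, and for the $D(k,l,m)$ family from an analogous graph-of-groups description of the metabelian quotient.

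The main obstacle is this last verification: the cyclic-cover bound $\mathrm{rk}_\q(G_m/G_m')\leq\deg\Delta_{G,\chi}+1$ recorded in Section~2 only controls $\beta_1$ of cyclic covers, so one needs a separate argument to rule out $\beta_1\geq 2$ for non-cyclic finite-index subgroups of the metabelian quotients that appear in Table 1. The Bass--Serre picture handles this uniformly because cyclic vertex and edge groups are preserved under passage to finite index subgroups, but one must confirm case by case that each entry of Table 1 really is isomorphic (not merely Alexander-polynomial equivalent) to a standard $BS$ or $D$ group of the relevant non-large subfamily. This identification is already implicit in the analysis carried out for Theorem 3.3, so no new work is required and the corollary follows.
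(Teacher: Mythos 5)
The paper gives no explicit proof of this corollary; it is simply asserted as an immediate consequence of Theorem~3.3 together with the (assumed-known) virtual first Betti numbers of the specific non-large groups that arise. So there is no ``paper's own proof'' to compare against line by line. Your filled-in argument follows the structure the paper clearly has in mind (large $\Rightarrow$ infinite $\mathrm{vb}_1$; identify the non-large groups; compute their $\mathrm{vb}_1$ directly), and the reduction $\beta_1(H)=\beta_1(H/G'')$ via $R_H=R_G=G''$ is a genuinely useful observation that the paper leaves implicit. However there are two points you should tighten.

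First, you state that the Table~1 groups are ``$BS(m,n)$ with $\gcd(m,n)>1$''; this has the condition backwards. The Baumslag--Solitar groups $BS(m,n)$ with $\gcd(m,n)>1$ are precisely the \emph{large} ones (they surject onto $\z_d * \z$ for $d=\gcd(m,n)\geq 2$), whereas the non-large BS groups, and the ones appearing as descriptions in Table~1 (e.g.\ $BS(2,-3)$, $BS(3,4)$), have coprime parameters. (The sentence in Section~3.1 asserting ``not large if and only if $l_1$ and $l_2$ are not coprime'' appears to be a typo in the paper; you have inherited it.) Your subsequent Bass--Serre analysis does in fact use the coprime case, so the substance survives, but the statement as written asserts $\mathrm{vb}_1=1$ for a family of groups that actually have $\mathrm{vb}_1=\infty$.

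Second, the Mayer--Vietoris step for finite-index $H\leq BS(p,q)$ is underspecified. With all vertex and edge groups infinite cyclic one has $\beta_1(H;\q)=\beta_1(\Gamma_H)+\dim\operatorname{coker}(\oplus_e\q\to\oplus_v\q)$, where $\Gamma_H$ is the quotient graph; ``unbalanced degrees'' controls the cokernel term, but you also need $\beta_1(\Gamma_H)\leq 1$, which does not come for free from the degree condition. It does follow from the non-largeness of $H$ (if $\beta_1(\Gamma_H)\geq 2$ then $H$ surjects onto a nonabelian free group by collapsing vertex and edge groups), and you should say so, since otherwise the step is circular-looking. For the $D(k,l,m)$ case with $|l-m|=1$, on the other hand, you are overcomplicating matters: there all finite images are cyclic, so $G/G''=G/G'\cong\z$ and $\mathrm{vb}_1=1$ immediately, with no graph-of-groups analysis of the metabelian quotient needed. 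Finally, note that entries 25 and 26 of Table~1 are marked ``?''; the corollary (and hence any proof of it) tacitly excludes them or is conditional on the answer to Question~3.1.
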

\begin{co}
If $G$ is as in Theorem 3.3 
with $r$ having length at most 12 and height at least
2 then $G$ is large.
\end{co}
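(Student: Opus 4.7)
The plan is simply to combine Theorem 3.3 with the observation that both families of non-large presentations listed there consist of height 1 words. By Theorem 3.3, if $G$ is a 2 generator 1 relator group in Magnus form with relator $r$ cyclically reduced of length at most 12, then $G$ fails to be large only if either $r$ appears in Table 1, or the presentation is of Baumslag--Solitar form $\langle a,t\mid ta^pt^{-1}=a^q\rangle$ with $\gcd(p,q)=1$. Hence it suffices to show that every presentation in either exceptional family has height 1.

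For Table 1 this is immediate, since Section 3.2 is devoted precisely to the enumeration of height 1 presentations of length up to 14 defining non-large groups, and Table 1 is the output of that enumeration. For the Baumslag--Solitar presentations, the cyclically reduced relator $ta^pt^{-1}a^{-q}$ has $t$-exponent sequence $(k_1,k_2)=(1,-1)$, so in the notation preceding the definition of height we have partial sums $s_1=1$ and $s_2=0$, giving height $\max(s_i)-\min(s_i)=1$.

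Therefore whenever $r$ is cyclically reduced in Magnus form, of length at most 12 and of height at least 2, neither of the exceptional cases in Theorem 3.3 can arise, and $G$ is large. The only step requiring genuine input is Theorem 3.3 itself; once that is in hand, the corollary is a tautological consequence of the fact that all known obstructions to largeness in this length range are height 1 phenomena.
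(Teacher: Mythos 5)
Your proposal is correct and matches the paper's (implicit) reasoning: the corollary is stated without a separate proof precisely because it follows from Theorem 3.3 once one observes that every exceptional presentation — the Baumslag–Solitar relators $ta^pt^{-1}a^{-q}$ (partial sums $s_1=1$, $s_2=0$, hence height $1$) and everything in Table 1, which is by construction the table of height 1 exceptions — has height exactly $1$. Your computation of the height and your appeal to the cyclic-permutation/inversion invariance of height are both accurate.
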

As noted at the start of this subsection, this is not true once we move to
length 13.

\subsection{First Betti number equal to two}
It may have been noticed in this section that if the relator $r$ is in the
commutator subgroup $F_2'$ (which is equivalent to the group 
$G=\langle x,y|r\rangle$ having $\beta_1(G)=2$) then no examples have yet
been found where $G$ is not large (except for $r=[x,y]=xyx^{-1}y^{-1}$).
Although it is possible for $G$ not to be large in this case, such as the
example $\langle x,y|[y^{-1},x][x,y][y^{-1},x]^{-1}=[x,y]^2\rangle$ in
\cite{bmgg} Corollary 2, there is theoretical evidence that $G$ is often
large; the strongest result in this area is \cite{meis} Theorem 3.6 stating
that if $r$ is actually a commutator and there is $H\leq_fG$ with
$d(H/H')\geq 3$ then $G$ is large (and if the
condition on $H$ is removed then the only known counterexample is 
$\z\times\z$).

In this subsection we describe calculations which show that if $G=\langle
x,y|r\rangle$ for $r\in (F_2)'$ with word length at most 16 
then either $G=\z\times\z$ (which is well known only to happen if
$r=[x,y]$ when reduced and cyclically reduced, up to a cyclic permutation
of $r$ or of $r^{-1}$, so $r$ always has length 4) or $G$ is large. The
example given above suggests this would not be true for word length 20;
in fact writing this word out reveals that it has length 18. Whilst it
might not be remarkable that no counterexamples exist that have length
shorter than 18, we also show with calculations on a substantial number
of cases the somewhat more surprising result that this is the only
counterexample amongst words of length 18. 

Given $r\in F_2'$, it must have even length. From 3.3 we are fine for
length at most 12, and in fact for length 14 too: we check that no
relator $r$ left over in the lists for length 14 is in $F_2'$. This
covers all cases except the 3 $t$-shapes where the program failed to 
complete: these were $t^2a^{l_1}t^{-2}a^{l_2}ta^{l_3}t^{-1}a^{l_4}$,
$t^2a^{l_1}t^{-1}a^{l_2}ta^{l_3}t^{-1}a^{l_4}t^{-1}a^{l_5}$ and
$t^2a^{l_1}t^{-1}a^{l_2}t^{-1}a^{l_3}$. On running the program again
with the condition that the exponent sum of $a$ is zero, we very quickly
find that all such presentations of length 14 give large groups.

Moreover we have by the combined results of
\cite{edph} and \cite{meis} that $G$ is always large (or $\z\times\z$)
if $r\in F_2'$ and has syllable length 4 or 6. Therefore when we move on
to length 16 we need only consider
words with 8 syllables or above. We have that $r$ is automatically in
Magnus form with respect to both $a$ and $t$, and remains so under
any Nielsen transformation although the word length can change. We
therefore assume that $k_1>0$ and has largest modulus amongst both the
$k_i$ and the $l_i$ (else we can swap $a$ and $t$). We then fix the $k_i$
whilst counting through the various $l_i$ as in 3.3 but subject to
$l_1+\ldots +l_n=0$ and word length exactly 16. Note also that we do not
need to consider $k_1=1$, which is where the syllable length is also 16
and $|k_i|=|l_i|=1$ for each $i$. This is because if the appearances of
$k_i$ do not alternate in sign then we can use the substitution as
described in Section 3.3 to either reduce the length of $w$ or get
$k_1\geq 2$. However if the signs of the $k_i$s do alternate then we observe
that the Alexander polynomial with respect to $t$ must be 0 modulo 2.

However here we begin to encounter the problem that, whilst all subgroups
of index up to 12 can usually be found, it takes too long to rewrite for
presentations of certain subgroups of quite low index and our run becomes
stuck. We deal with this when it occurs by breaking and performing the
whole run of cases again up to a lower index (often index 6, 7 or 8).
We might then have a few presentations left over which have not been
proved large, but as before we can draw up a list and use ad hoc
arguments to deal with these cases. 
This list is given in Table 3, and in fact the only arguments needed to
conclude largeness for the leftover words are to take the Alexander polynomial
of a particular finite index subgroup with promising abelianisation
(where in the table we give the index, number within that index, and
abelianisation of this subgroup) or to
make a Nielsen move which converts the word into one that has already been
proved large (where we give this move, along with 
the number of the new presentation
if it is in this table, otherwise the new presentation was proved large
in the run without a problem).
There are also a tiny number of presentations which complete 
the rewriting process on all subgroups up to index 12 without being proved 
large, but the same arguments will work on these too.

We then move up to length 18 and proceed in the same way. Once again we list 
in Table 3 the presentations where we failed to rewrite for a subgroup
along with arguments that establish largeness; once again these work in the
same way as for length 16. However the main consumption of time is taken
up by typing up each entry from the long list of possible $t$-shapes and
waiting for the program to finish (or occasionally get stuck). We invoked
one short cut here: suppose the $t$-shape is such that the only possibilities
for the indices $l_i$ of $a$ are all $|l_i|=1$. 
If these indices all alternate in sign then we check this one 
particular case using the $d\geq 3$ test as in Section 3.2. Otherwise 
we swap $t$ and $a$ and then apply a transformation as before to ensure that
the exponents of the new letter
$t$ are not all $\pm 1$ (or the word may become
shorter under this process, in which case we are done). This is fine unless
we now find that all the exponents of the new letter
$a$ have modulus 1, which
would put us back where we started, but this can only happen by shortening
the word. Note also that we cannot have $|k_i|=|l_i|=1$ for all $i$ as
this implies that the word length is divisible by 4.

We find that all cases are proved large apart from when the $t$-shape is
$(2,-1,-1,2,-1,1,-1,-1)$ and the exponents of $a$ have modulus 1 and
alternate in sign, which is our one exception given earlier. Amusingly
we also get a ``near miss'' immediately before with $t$-shape
$(2,-1,-1,2,1,-1,-1,-1)$ where all subgroups up to
index 9 have abelianisation $\z^2$, but with lots of index 10 subgroups
having first Betti number equal to 4 so this group is large after all.

Thus we have a result that is definitive for words in the commutator 
subgroup with length at most 18.
\begin{thm} Suppose $G$ is given by a 2 generator 1 relator presentation
$\langle x,y|r\rangle$ with $r$ cyclically reduced and in the commutator
subgroup of $F_2$, with the word length of $r$ at most 18. Then
either $r$ has word length 4 and $G$ is isomorphic to $\z\times\z$ or $r$
has word length greater than 4 but less than 18, in which case $G$ is large,
or $r$ has word length 18 and $G$ is large except for
$r=[y^{-1},x][x,y][y^{-1},x]^{-1}[x,y]^{-2}$, up to cyclic permutation of
$r$ or $r^{-1}$ and replacing either generator by its inverse,
whereupon $G$ contains a non
abelian free group but all finite images of $G$ are abelian and all finite
index subgroups of $G$ have abelianisation $\z\times\z$.
\end{thm}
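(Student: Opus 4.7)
The plan is to combine the computational algorithm of Section~2 with the normalisation and ad hoc techniques already deployed in the proof of Theorem 3.3. Because $r\in F_2'$ the presentation is automatically in Magnus form with respect to both generators, and remains so under any Nielsen transformation (though the word length may change); I am thus free to cyclically permute $r$, invert either generator, and swap $a\leftrightarrow t$ when reducing to a canonical form. The first step is to normalise so that the first exponent $k_1$ of $t$ is positive and attains the maximum modulus among all exponents of both $a$ and $t$, and then for each even word length $L\le 18$ enumerate all $t$-shapes $(k_1,\ldots,k_n)$ with $\sum k_i=0$ together with all $a$-shapes $(l_1,\ldots,l_n)$ satisfying $\sum l_i=0$ and the length constraint.

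Second, I would eliminate the short cases theoretically. Length $4$ forces $r$ to be a cyclic rotation of $[x,y]^{\pm 1}$ with $G=\z\times\z$. For lengths $6\le L\le 14$ largeness (when $G\ne\z\times\z$) is already inside Theorem 3.3 for $L\le 12$, while for $L=14$ it remains only to rerun the three $t$-shapes that stalled in Section 3.3 with the extra constraint $\sum l_i=0$, which by a quick Magma check eliminates them immediately. By the combined results of \cite{edph} and \cite{meis}, a word in $F_2'$ of syllable length $4$ or $6$ always yields a large group (when not $\z\times\z$), so for $L\in\{16,18\}$ I may restrict to syllable length at least $8$. If $k_1=1$ then either the signs of the $k_i$ fail to alternate, in which case the substitution $t\mapsto ta^{\pm l}$ from Section~3.3 either shortens $w$ or forces $k_1\ge 2$, or they do alternate, in which case the Alexander polynomial with respect to $t$ is zero modulo $2$ and largeness follows from \cite{meis} Theorem~2.1 directly. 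I may therefore assume $k_1\ge 2$.

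Third, I run the Magma implementation of the Section~2 algorithm on every surviving presentation, with the subgroup index bound pushed to $12$. I expect the main obstruction to be computational rather than mathematical: for a handful of $t$-shapes (especially with $k_1=2$ and many syllables) Reidemeister--Schreier rewriting stalls on a particular low-index subgroup. These I handle in the same style as in the length $16$ analysis summarised in Table~3: break, rerun at a lower index bound (typically $6$, $7$, or $8$), and dispose of each leftover presentation either by computing the Alexander polynomial of a hand-selected finite index subgroup with promising abelianisation, or by exhibiting an explicit Nielsen move converting $r$ to a word already proved large. The awkward sub-case where all $|l_i|=1$ needs its own rule: if the $l_i$ alternate in sign I apply the $d(H/H')\ge 3$ test from Section~3.2, and otherwise I swap $a$ and $t$ and rerun the reduction, noting that a return to the original shape can only happen via a strict decrease in word length and so does not loop.

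Finally, the one exception should emerge at length $18$ from the $t$-shape $(2,-1,-1,2,-1,1,-1,-1)$ with all $|l_i|=1$ and alternating signs; this is, up to the allowed equivalences, exactly the word $r_0=[y^{-1},x][x,y][y^{-1},x]^{-1}[x,y]^{-2}$ of \cite{bmgg}, and the required properties (containment of a non abelian free subgroup, finite images abelian, every finite index subgroup with abelianisation $\z\times\z$) are proved there. A useful sanity check is the near-miss $t$-shape $(2,-1,-1,2,1,-1,-1,-1)$, for which every subgroup of index at most $9$ has first Betti number $2$ but plenty of index $10$ subgroups have first Betti number $4$, so largeness is still obtained. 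The real difficulty of the theorem is not any single case but the bookkeeping of ensuring that the enumeration of $(t\text{-shape},a\text{-shape})$ pairs at length $18$ is genuinely exhaustive modulo the normalisation moves; once this is arranged, every individual presentation is settled by one of the tools already assembled.
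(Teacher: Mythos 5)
Your proposal follows the paper's own proof essentially step for step: the same normalisation (making $k_1$ positive and of maximal modulus, using that $r\in F_2'$ is in Magnus form with respect to both generators), the same reduction of lengths up to 14 via Theorem 3.3 and the rerun of the three stalled $t$-shapes, the same appeal to \cite{edph} and \cite{meis} to discard syllable lengths 4 and 6, the same $k_1=1$ dichotomy (shorten/raise $k_1$ versus alternating signs giving $\Delta\equiv 0\bmod 2$), the same handling of the all-$|l_i|=1$ subcase by the $d\ge 3$ test or a non-looping swap, the same fall-back to lower index when rewriting stalls, and the same identification of the unique length-18 exception and the near-miss shape. This is the paper's argument reproduced faithfully, not an alternative route.
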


\section{Closed hyperbolic 3-manifolds}

The other examples on which we tried out our program were the
fundamental groups of closed orientable hyperbolic 3-manifolds (from
hereon we refer to these as 3-manifold groups) and again we met with
some success. It is an open question as to whether every 3-manifold
group is large. The focus of our work was the closed census of hyperbolic
3-manifolds \cite{cen} that accompanies the program Snappea. This is
available as a database in Magma of 11,126 entries, each of which contains
a presentation of the corresponding 3-manifold group, along with the
manifold's name and volume (which provides an ordering for the database).
A 3-manifold group always has a presentation of deficiency 0 (and never
has a presentation of strictly positive deficiency). In the census we
find that most presentations are 2 generator 2 relator with a fair number
of 3 generator 3 relator presentations, mainly concentrated amongst the
higher volume manifolds. 

Consequently we have no guarantee that the group has a finite index subgroup
with positive first Betti number. As our program merely ignores finite index
subgroups which have no homomorphisms onto $\z$, there would be no point
in entering a group if no such subgroup was already known or easily found.
Fortunately this has already been covered in the paper \cite{dnth} by
N.\,Dunfield and W.\,Thurston in which it was proved that all 3-manifold
groups in this census have positive virtual first Betti number. However
this was a major computational undertaking involving about a year of CPU
time. The final group to be completed was found to have this property
by running Magma to obtain its subgroups of index 14 and then examining
the abelianisations. This took two days, which was about the time we had
available to look at the whole census. Moreover the highest index of a
subgroup that provided positive virtual first Betti number of a group
was 515,100 which (although this may not be the minimal index
for this group) is far
too high to expect the computer to provide a presentation to check for
largeness. (There is room for considerable improvement however. When
Magma calculates the abelianisation of a finite index subgroup obtained
from a given finitely presented group, it does not work out a full
presentation for the subgroup but abelianises the relations as it goes along.
When one works out the Alexander matrix of a finitely presented group
$G$, one is really calculating a presentation matrix 
for $N/N'$ when considered as a
$\z [t_1^{\pm 1},\ldots ,t_b^{\pm 1}]$-module, 
where $\beta_1(G)=b$ and $G/N=\z^b$,
so it should be possible to calculate this directly too. Moreover the paper
\cite{dnth} exploited the fact that one only wants to know if the 
abelianisation of a subgroup is infinite, rather than needing explicitly
to find the rank and torsion, and some representation theory of finite
groups was utilised at this point.)

Therefore the approach we adopted was to look for largeness amongst groups
which have a very low index subgroup with infinite abelianisation. Trying
this out with index at most 5, we find in less than two minutes (with the
code given at the end) that there are 2856 groups with this property, 
which at just 
over a quarter is a decent proportion. The choice of 5 was made by wanting
a bound that ensured we were covering a large enough sample of the census
in our program, but also because we ran our largeness routine up to index
10 on all of these groups. The reason for going up to double the initial
choice of
index is that, because a group with a homomorphism onto $\z$ has subgroups
of all finite indices with the same property, we know we will have
more than one subgroup of the original group with positive Betti number
that can be checked for largeness. (In fact we started with the 4948
groups with such a subgroup of at most index 6 and then tried finding
subgroups up to index 12: this was working well on the earlier groups
but became too slow when dealing with the 3 generator 3 relator presentations.
Note also that the proportion of groups having a low index subgroup with
positive first Betti number is higher here than in \cite{dnth} because
there other methods were first used to find such subgroups, rather than
enumeration of all low index subgroups.)

We then ran the routine described in Section 2 on this list of 2856
3-manifold groups, checking subgroups up to index 10. The results were
encouraging. We ran the program in batches of a few hundred at a time and
only 130 failed to be proved large by this method. Some
3-manifold groups were dealt with very quickly although several towards
the end could take quite a time; the longest wait was over an hour in
finding the subgroups of index 10 for number 10017. (There were three
cases where we gave up on finding the index 10 subgroups after waiting for
over an hour: numbers 10540, 10671 and 10922.)

For a few of the very early
3-manifold groups with more tractable presentations, we increased the index
for finding subgroups up to 12 or 14. This produced 4 other large groups
in our sample, although it worked for considerably more groups where the
earliest subgroup with positive first Betti number appears at index 6.
We then used the
data in Snap \cite{sn} to see if any 3-manifold groups in the leftover list
were arithmetic. This is because any arithmetic 3-manifold group in the
census is large, by combining \cite{lklr} Theorem 6.1 (an arithmetic
3-manifold group with a finite index subgroup having first Betti number
at least 4 is large), \cite{clr} or \cite{venk} (an arithmetic 3-manifold
group with positive virtual first Betti number has infinite virtual first
Betti number), and the Dunfield-Thurston paper (every census 3-manifold
has positive virtual first Betti number). This provided 8 more examples,
marked by ``Ar'' in Table 4,
although they all came from early on in the census. Combining the data,
we were left with 118 3-manifold groups left over that were not proved large.

Amongst these 118 were 2 groups which themselves have positive first Betti
number, out of 132 in the database. We pushed up the index to 11 or 12
here and selected particular subgroups to try. This established largeness
for the remaining pair (number 3552 which has volume 4.7874 and is called
v1721(1,4), and number 3763 which has volume 4.8511 and is s828(-4,3)).
Moreover we notice that all of the 305 further 3-manifolds in the census
having a double cover with positive first Betti number have been proved 
large, giving
\begin{thm}
If $G$ is the fundamental group of a closed hyperbolic 3-manifold in the
census and $\beta_1(G)\geq 1$ or $G$ has a double cover with positive
first Betti number then $G$ is large.
\end{thm}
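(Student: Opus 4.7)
The plan is to prove this theorem purely computationally, by feeding each candidate group into the algorithm of Section 2 and verifying largeness on a case-by-case basis. Observe first that any census group $G$ satisfying the hypothesis already lies in the class considered in the preceding discussion: if $\beta_1(G)\geq 1$ then $G$ itself has an index-1 subgroup of positive first Betti number, while if $G$ has a double cover with positive first Betti number then it contains an index-2 such subgroup. Hence every $G$ in the statement is one of the 2856 groups already singled out.

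Next I would apply the Section 2 routine uniformly: for each $G$ in the relevant sublist, enumerate subgroups $H\leq_f G$ with $[G:H]\leq 10$, rewrite each $H$ using Reidemeister--Schreier, and for those $H$ with $\beta_1(H)\geq 1$ run the Alexander-polynomial test, splitting according to whether $\beta_1(H)=1$, $\beta_1(H)=2$, or $\beta_1(H)\geq 3$ as prescribed in Section 2. By \cite{meis} Theorem 2.1, finding any such $H$ with $\Delta_{H,\chi}=0$ (mod $p$ or mod $0$) for some surjective $\chi\colon H\to\z$ proves largeness of $G$. According to the preceding discussion, this wholesale sweep already handles the overwhelming majority of groups, leaving only 118 census groups unresolved, and moreover every one of the 305 manifolds whose double cover has positive first Betti number falls outside this leftover list and is therefore dispatched.

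The only residual work is for the two groups with $\beta_1(G)\geq 1$ which appear in the leftover list, namely number 3552 (volume 4.7874, v1721(1,4)) and number 3763 (volume 4.8511, s828($-4,3$)). For these I would push the subgroup enumeration up to index 11 or 12 and, rather than test every subgroup, focus on those $H$ whose abelianisation looks promising, in particular those with $\beta_1(H)\geq 2$ or with substantial torsion coming from low-index normal subgroups obtained via \texttt{LowIndexNormalSubgroups}. On such an $H$ I would compute the multivariable Alexander matrix $B$ and apply the $\beta_1(H)=2$ (or $b\geq 3$) variant to search for a $\chi$ with $\Delta_{H,\chi}=0$ modulo some small prime, typically 2.

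The main obstacle is practical rather than conceptual: finding all subgroups of index 11 or 12 of a deficiency-0 presentation, and then rewriting to get a finite presentation for a specific candidate $H$, is expensive and may stall for some inputs. Once an appropriate $H$ is in hand, however, the Alexander-matrix evaluation is quick, so the effort goes into choosing the right subgroup. The fact that both residual groups already have $\beta_1(G)\geq 1$ gives a canonical cyclic cover to iterate on (bootstrapping as in Section 3.3), which should allow a targeted search to terminate in reasonable time and complete the proof.
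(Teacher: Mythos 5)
Your proposal follows the paper's proof essentially step for step: restrict to the 2856 census groups with a subgroup of index at most 5 having positive first Betti number, run the Section 2 routine on subgroups up to index 10, observe that the 305 double-cover groups are all dispatched and that only two positive-$\beta_1$ groups (numbers 3552 and 3763) survive, then push those two to index 11 or 12 with a targeted choice of subgroup and Alexander-polynomial test. One small imprecision worth flagging: the index-10 sweep alone leaves 130 unresolved groups, not 118; the paper arrives at 118 only after additionally proving 4 of them large by going to index 12 or 14 and 8 more by an arithmeticity argument (combining Lackenby--Long--Reid, Cooper--Long--Reid or Venkataramana, and Dunfield--Thurston). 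Since your argument ultimately rests on the empirical observation that the 305 double-cover manifolds and $130$ of the $132$ positive-$\beta_1$ manifolds are all in the proved-large list, you should either adopt the paper's intermediate arithmetic and higher-index steps or verify directly that none of the 305 double-cover groups fall among the 130 left over after the index-10 sweep; otherwise the claim that they are ``therefore dispatched'' is not quite self-contained. The treatment of the two residual groups is exactly the paper's.
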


The results are listed in Table 4. As we did not want to list all 2740
of the census 3-manifolds which have been proved large, the tables give
the 130 cases where the 3-manifold group had a subgroup of index at most 5
with positive Betti number but where largeness was not established by
this method up to index 10, with the 14 groups proved large by 
alternative means marked in the table with a tick.
Therefore in order to utilise the information for the $n$th manifold
in the database, one needs to first run a Magma program along the lines
of:\\
\hfill\\
\texttt{
> M:=ManifoldDatabase();\\
> G:=Manifold(M,n)`Group;\\
> for j:=1 to 5 do\\
> j; L:=LowIndexSubgroups(G,<j,j>);\\
> for k:=1 to \#L do\\
> A:=AQInvariants(L[k]);\\
> if \#A gt 0 and  A[\#A] eq 0 then "yes!"; break j; end if;\\
> end for;\\
> end for;
}\\
\hfill\\
If the answer yes is obtained then the 3-manifold group is large if it
does not feature in Table 4, or if a tick is present.

There was an initial concern with our
method. A finite index subgroup of a 3-manifold group always has
a presentation of deficiency zero because it is itself a 3-manifold group,
being the fundamental group of a finite cover of a 3-manifold. However
the computer does not know this and often on rewriting we obtained
presentations of strictly negative deficiency ($-4$ was the lowest value
observed, with $-1$ appearing quite often and $-2$ or $-3$ cropping up
occasionally). The approach we first
adopted during the runs was to ignore these subgroups, even though
they might prove largeness, because we feared the running time might be 
substantially
lengthened owing to the larger number of minors that need to be calculated.
This still managed to prove all but about 200 groups were large,
but was overcautious in that when the program was extended to include
presentations of arbitrary deficiency, it then if anything ran more quickly
because lower index subgroups could establish largeness earlier on.

Based on the above evidence we feel that the vast majority of the
census 3-manifold groups are likely to be large, because we have
only scratched the surface of what might be done using this approach,
given sufficient time and computing resources. However as to whether
this provides evidence for 3-manifold groups being large in general,
we would need to have an idea of whether the census 3-manifolds
provide a ``typical'' sample.

\section*{Appendix: Tables}
\markboth{APPENDIX: TABLES}{APPENDIX: TABLES}
{\bf Largeness in Table 2}\\
Here we briefly describe the arguments that allow us to conclude
that all presentations in Table 2 give rise to large groups.\\
\hfill\\
{\bf \#1}: See Theorem 3.2.\\
{\bf \#2}: Take the fourth subgroup of index 13 with abelianisation
$(C_3)^4\times\z$. Then the third index 3 subgroup of this with
abelianisation $(C_3)^4\times C_9\times\z^3$ has multivariable
Alexander polynomial $\Delta(x,y,1)=0$ mod
27.\\
{\bf \#3}: Take the third subgroup of index 15 with abelianisation
$(C_2)^3\times C_{22}\times\z$. Then find the normal subgroups up to
index 4; number 65 in this list is of index 4 and
has abelianisation $(C_2)^3\times C_4
\times C_{44}\times\z^3$. Moreover $\Delta(x,1,1)=0$ mod 2.\\
{\bf \#4}: Index 15 number 5 with abelianisation 
$(C_2)^3\times C_{14}\times\z$ has as its first index 2 subgroup a group
with abelianisation $C_2\times C_4\times C_{28}\times\z^2$ and
$\Delta(t,t)=0$ mod 2.\\
{\bf \#5}: Index 11 number 2 with abelianisation 
$C_6\times\z^2$ has as the seventh and last index 2 subgroup a group
with abelianisation $C_3\times\z^3$ and
$\Delta(t,t,t^{-1})=0$ mod 2.\\
{\bf \#6}: The index 8 number 2 subgroup $H$ with abelianisation 
$(C_3)^2\times C_{39}\times\z$ has as its first index 3 subgroup $L$
a group with abelianisation $C_3\times C_{18}\times C_{234}\times\z$ and
$\Delta(t)=0$ mod 2. Note that $\beta_1(L)=1$.\\
{\bf \#7}: The index 6 number 2 subgroup with
abelianisation $C_2\times C_6\times\z$ has 
the index 4 number 15 subgroup with abelianisation $C_2\times C_{4}\times 
C_{156}\times\z^2$ and $\Delta(1,y)=0$ mod 2.\\
{\bf \#8}: The index 7 number 2 subgroup with abelianisation $\z^2$ has 
as its first index 4 subgroup a group with
abelianisation $C_{340}\times\z^2$ and $\Delta(1,y)=0$ mod 17.\\
{\bf \#9}: The index 12 number 1 subgroup with abelianisation $C_5\times\z^2$ 
has as its first index 3 subgroup a group with
abelianisation $C_{10}\times\z^2$ and $\Delta(1,y)=0$ mod 2.\\
{\bf \#10}: After failing to make progress, it was noticed that this group
was isomorphic to \#12.\\
{\bf \#11}: This group is isomorphic to \#9.\\
{\bf \#12}: This was the hardest group to prove large in the whole paper.
The index 16 (finding these takes several minutes but no longer, which is
perhaps surprising for such a high index) 
number 7 subgroup with abelianisation 
$C_6\times\z^2$ 
has as its first index 2 subgroup a group with
abelianisation $C_3\times C_6\times\z^3$ and $\Delta(t^{-1},t^2,1)=0$ mod 2.\\
{\bf \#13}: The index 15 number 2 subgroup with abelianisation 
$(C_2)^2\times(C_4)^2\times
C_{36}\times\z$ has as its first index 2 subgroup a group with
abelianisation $(C_2)^2\times C_4\times C_8 \times C_{72}\times\z^2$ 
and $\Delta(x,1)=0$; this is actually equal to 0 as opposed to 0 modulo
something.\\
{\bf \#14}: This group is isomorphic to \#13.\\
{\bf \#15}: The index 12 number 2 subgroup with abelianisation 
$C_5\times\z^2$ has as its fourth index 3 subgroup a group with
abelianisation $C_{10}\times\z^2$ 
and $\Delta(1,y)=0$ mod 2.\\
{\bf \#16}: The first index 7 subgroup with abelianisation 
$(C_2)^3\times\z$ has as its second index 4 subgroup a group with
abelianisation $(C_2)^2\times C_4\times\z^3$ 
and $\Delta(t^2,1,t^{-1})=0$ mod 4.\\
\begin{table}
\caption{2 generator 1 relator height 1 presentations}
\centering
\begin{tabular}{|lrllc|}
\hline
Length&Number&Presentation&Description&Large\\
\hline
4.9 & 1 & $ta^2TataTa$   &$BS(2,-3)$ &\texttt x\\
    & 2 & $ta^2TatATA$   &$BBG$      &\texttt x\\
    & 3 & $ta^2TAtaTA$   &$BS(2,3)$  &\texttt x\\
4.11& 4 & $ta^2Ta^2ta^2Ta$&$BS(3,-4)$ &\texttt x\\
    & 5 & $ta^2Ta^2taTa^2$&$\cong \#4$ &\texttt x\\
&      6 & $ta^2Ta^2tATA^2$&$D(2,1,2)\cong BBG$&\texttt x\\
&      7 & $ta^2Ta^2tA^2TA$&$\cong \# 6$&\texttt x\\
&      8 & $ta^2TatA^2TA^2$&$\cong \# 6$&\texttt x\\
&      9 & $ta^2TAta^2TA^2$&$BS(3,4)$&\texttt x\\
&     10 & $ta^2TA^2taTA^2$&$\cong \# 9$&\texttt x\\
&     11 & $ta^3Tata^2Ta$&$BS(2,-5)$  &\texttt x\\
&     12 & $ta^3TatA^2TA$&$D(1,2,3)$ &\texttt x\\
&     13 & $ta^3TAta^2TA$&$BS(2,5)$ &\texttt x\\
4.13& 14 & $ta^3Ta^2ta^3TA$&      &\checkmark\\
&     15 & $ta^3Ta^2ta^2Ta^2$&$BS(4,-5)$&\texttt x\\
&     16 & $ta^3Ta^2tA^2TA^2$&$D(2,2,3)$&\texttt x\\
&     17 & $ta^3Ta^2tA^3TA$&$D(3,2,1)$&\texttt x\\
&     18 & $ta^3Tata^3TA^2$&&\checkmark\\
&     19 & $ta^3TatA^3TA^2$&$\cong \# 17$&\texttt x\\
&     20 & $ta^3TA^2ta^2TA^2$&$BS(4,5)$&\texttt x\\
&     21 & $ta^4Tata^3Ta$&$BS(2,-7)$&\texttt x\\
&     22 & $ta^4TatA^3TA$&$D(1,3,4)$&\texttt x\\
&     23 & $ta^4TAta^3TA$&$BS(2,7)$&\texttt x\\
6.13& 24 & $ta^2TataTataTa$&$BS(3,-4)$&\texttt x\\
    & 25 & $ta^2TatATatATA$&?&?\\
    & 26 & $ta^2TatATAtATA$&?&?\\
    & 27 & $ta^2TAtaTAtaTA$&$BS(3,4)$&\texttt x\\
6.14& 28 & $ta^2Tata^2TataTa$&$BS(3,-5)$&\texttt x\\
    & 29 & $ta^2TAta^2TAtaTA$&$BS(3,5)$&\texttt x\\
\hline
\end{tabular} 
\end{table}
\hfill\\ 
\begin{table}
\caption{2 generator 1 relator non height 1 presentations}
\centering
\begin{tabular}{|rrll|}
\hline
Length&Number&Presentation&Large\\
\hline
9 & 1 & $t^3aT^2ATA$&\checkmark\\
11& 2 & $t^4AT^3ATA$&\checkmark\\
  & 3 & $t^4AT^3aTA$&\checkmark\\
  & 4 & $t^4aT^3aTA$&\checkmark\\
  & 5 & $t^3aT^2A^2TA^2$&\checkmark\\
  & 6 & $t^3A^2T^2aTA^2$&\checkmark\\
&   7 & $t^3AT^2a^2TA^2$&\checkmark\\
&   8 & $t^3a^3T^2ATA$&\checkmark\\
&   9 & $t^3ATATA^2TA$&\checkmark\\
&  10 & $t^3aTaTA^2TA$&\checkmark($\cong\#12$)\\
&  11 & $t^3ATA^2TATA$&\checkmark($\cong\#9$)\\
&  12 & $t^3aTa^2TATA$&\checkmark\\
&  13 & $t^2aT^2ataTA^2$&\checkmark\\
 & 14 & $t^2AT^2a^2tATA$&\checkmark($\cong\#13$)\\
&  15 & $t^2AtATATATA$&\checkmark\\
&  16 & $t^2ataTATATA$&\checkmark\\
\hline
\end{tabular} 
\end{table}
\hfill\\ 
\begin{table}
\caption{2 generator 1 relator presentations with first Betti number two}
\centering
\begin{tabular}{|rrll|}
\hline
Length&Number&Presentation&Large\\
\hline
16 & 1 & $t^4AtaT^3ATa^2TA$&\checkmark ($\cong 16\#5$ via $a\mapsto at^3$)\\
   & 2 & $t^4ATaT^3Ata^2TA$&\checkmark ($\cong 16\#1$ via $a\mapsto at^4$)\\
   & 3 & $t^3atAT^3AtaTaTA$&\checkmark ($\cong 16\#1$ via $a\mapsto at$)\\
   & 4 & $t^3AtaT^3ATataTA$&\checkmark ($\cong 16\#1$ via $a\mapsto aT$)\\
   & 5 & $t^3ATaT^3AtataTA$&\checkmark (Index 9 subgroup 10 [0,0,0,0])\\  
   & 6 & $t^3aTAT^3AtaTatA$&\checkmark ($\cong 16\#5$ via $a\mapsto aT^2$)\\
   & 7 & $t^3atAT^2AtaT^2aTA$&\checkmark ($\cong 16\#3$ via $a\mapsto at$)\\
   & 8 & $t^2a^2T^2ATa^2tAtATA$&\checkmark (Index 9 subgroup 15 [0,0,0])\\
18 & 1 & $t^4aTaTa^3TA^2TA^3$&\checkmark (Index 13 subgroup 14 [0,0,0])\\
   & 2 & $t^4aT^2AtaTA^2Ta^2TA$&\checkmark (Index 9 subgroup 17 [3,0,0,0])\\
   & 3 & $t^4ATataTaTATaTA^2$&\checkmark ($\cong 18\#5$ via $a\mapsto at$)\\
   & 4 & $t^4ATatATATa^2TaTA$&\checkmark ($a\mapsto at$)\\
   & 5 & $t^3ATat^2a^2TATaTATA$&\checkmark (Index 10 subgroup 24 
                                 [3,18,0,0,0])\\
   & 6 & $t^3ATat^2ATaTa^2TATA$&\checkmark (Index 12 subgroup 18 [0,0,0])\\
   & 7 & $t^3ATa^2T^3atA^2tATa$&\checkmark ($a\mapsto at$)\\
   & 8 & $t^3ATAT^3aTa^2tatA^2$&\checkmark (Index 13 subgroup 11 [0,0,0])\\
   & 9 & $t^3atA^2ta^2T^2aT^2ATA$&\checkmark (Index 7 subgroup 13 [0,0,0,0])\\
   &10 & $t^3atAT^2a^2TA^2taT^2A$&\checkmark (Index 9 subgroup 36 [2,0,0,0])\\
   &11 & $t^3atAT^2aTA^2ta^2T^2A$&\checkmark (Index 10 subgroup 31 [3,0,0,0])\\
   &12 & $t^3atA^2T^2a^2taT^2ATA$&\checkmark ($a\mapsto aT$)\\
   &13 & $t^2aTata^2TA^2taTA^2TA$&\checkmark ($\cong 18\#17$ via 
                                              $a,t\mapsto t,a$)\\ 
   &14 & $t^2aTA^2ta^2TataTA^2TA$&\checkmark ($\cong 18\#20$ via 
                                              $a,t\mapsto t,a$)\\

   &15 & $t^2a^2TAta^2TatA^2TATA$&\checkmark ($a,t\mapsto t,a$)\\
   &16 & $t^2a^2t^2atATATaT^2ATA$&\checkmark ($a,t\mapsto t,a$)\\
   &17 & $t^2aTAt^2Ata^2TATaT^2A$&\checkmark 
(Index 9 subgroup 19 [2,2,0,0,0])\\
   &18 & $t^2aTAt^2AT^2ATata^2TA$&\checkmark ($\cong 18\#6$ via
                                                   $a\mapsto aT$)\\
   &19 & $t^2AT^2aTat^2AtaTaTA^2$&\checkmark ($\cong 18\#16$ via
                                               $a\mapsto at^2$)\\
   &20 & $t^2aTA^2tat^2aTATaT^2A$&\checkmark ($\cong 18\#17$ via
                                               $a\mapsto aT^2$)\\   
   &21 & $t^2aTATat^2ATatATaTA$&\texttt x\\
\hline
\end{tabular} 
\end{table}
\hfill\\ 
\begin{table}
\caption{Closed census hyperbolic 3-manifolds}
\centering
\begin{tabular}{|rlll|rlll|}
\hline
\#&Volume&Name&Large&\#&Volume&Name&Large\\
\hline
8 & 1.4140 & m003(-3,4)&\checkmark (Ar)&
15& 1.5831 & m007(4,1)&\checkmark (Ar)\\
31& 1.8854 & m006(-1,3)&\checkmark (12)&
117&2.4903 & m023(-6,1)&\checkmark (14)\\
210&2.8281&m206(1,2)&\checkmark (Ar)&
244&2.9545&m249(3,1)&\\
295&3.0805&m117(-4,3)&\checkmark (14)&
407&3.2424&m322(2,1)&\\
493&3.3910&m223(-4,1)&&
526&3.4147&m181(-5,1)&\\
572&3.4644&m293(-3,1)&&
674&3.5817&s663(-3,1)&\checkmark (Ar)\\
686&3.5899&m293(1,3)&&
722&3.6281&m249(5,1)&\\
731&3.6360&m238(4,3)&&
932&3.7842&m285(-5,2)&\\
985&3.8418&m322(1,3)&&
1255&4.0188&s663(1,2)&\checkmark (Ar)\\
1310&4.0557&s348(-5,1)&\checkmark (12)&
1324&4.0597&s912(0,1)&\checkmark (Ar)\\
1367&4.0761&s869(-3,1)&&
1407&4.0982&s667(-1,2)&\\
1597&4.1925&s663(-4,1)&&
1728&4.2438&s481(-1,4)&\\
2027&4.3769&m392(-3,4)&&
2070&4.3976&s495(2,3)&\\
2376&4.4954&s705(-4,1)&&
2425&4.5091&v2623(3,1)&\\
2698&4.5853&s645(1,4)&&
2751&4.6044&s663(-5,2)&\\
2793&4.6166&s958(3,1)&&
2834&4.6265&s646(3,4)&\checkmark (Ar)\\
2839&4.6278&s723(4,1)&&
2974&4.6626&s673(5,2)&\\
3095&4.6920&s686(5,1)&&
3210&4.7232&s932(-3,1)&\\
3552&4.7874&v1721(1,4)&\checkmark (12\#3)&
3578&4.7969&v2380(2,3)&\\
3702&4.8309&v1534(5,2)&&
3763&4.8511&s828(-4,3)&\checkmark (11\#1)\\
3801&4.8550&s932(3,1)&&
4035&4.9068&s855(-3,2)&\checkmark (Ar)\\
\hline
\end{tabular}
\end{table}

\begin{table}
\centering
\begin{tabular}{|rlll|rlll|}
\hline
\#&Volume&Name&Large&\#&Volume&Name&Large\\
\hline
4294&4.9664&s872(-1,4)&&
4309&4.9708&s932(-1,3)&\\
4332&4.9751&s932(-3,2)&&
4382&4.9875&s869(1,3)&\\
4471&5.0010&s860(6,1)&&
4599&5.0348&s908(-2,3)&\\
4627&5.0420&v2403(-3,1)&&
4705&5.0578&v2689(4,1)&\\
4745&5.0662&v2217(1,3)&&
4766&5.0730&s932(3,2)&\\
4808&5.0793&v2600(1,3)&&
4847&5.0864&v2478(-4,1)&\\
4899&5.0976&v2568(-1,3)&&
4924&5.1030&s903(-4,1)&\\
4948&5.1080&s907(-4,3)&&
4989&5.1158&v2507(-4,1)&\\
5212&5.1517&v2986(4,1)&&
5262&5.1626&v2920(-4,1)&\\
5292&5.1692&v2315(3,4)&&
5358&5.1793&v2380(-3,2)&\\
5462&5.1979&v2910(-3,2)&&
5556&5.2110&v2438(-5,1)&\\
5574&5.2164&v2344(6,1)&&
5588&5.2181&v2493(5,1)&\\
5617&5.2200&v2876(3,2)&&
5619&5.2204&v2735(-3,2)&\\
5825&5.2515&v2439(1,4)&&
5965&5.2706&v3247(-1,3)&\\
5966&5.2706&v2923(-2,3)&&
5984&5.2727&v2507(-1,4)&\\
6217&5.3072&v2473(-1,3)&&
6423&5.3352&v2623(6,1)&\\
6525&5.3544&v2813(1,3)&&
6572&5.3593&v2919(3,2)&\\
6581&5.3615&v3191(1,3)&&
6633&5.3700&v2600(-4,3)&\\
6751&5.3866&v3147(3,2)&&
6763&5.3882&v3132(2,3)&\\
6848&5.4000&v2897(-1,3)&&
6900&5.4071&v2817(-2,3)&\\
6972&5.4194&v2660(1,4)&&
7020&5.4245&v2661(3,4)&\\
7061&5.4301&v2919(-5,1)&&
7369&5.4734&v3246(1,3)&\\
\hline
\end{tabular}
\end{table}

\begin{table}
\centering
\begin{tabular}{|rlll|rlll|}
\hline
\#&Volume&Name&Large&\#&Volume&Name&Large\\
\hline
7370&5.4734&v3244(1,3)&&
7407&5.4790&v2734(3,4)&\\
7564&5.5022&v2897(4,3)&&
7644&5.5142&v3132(4,1)&\\
7713&5.5261&v2828(-2,3)&&
7731&5.5287&v2813(-4,3)&\\
7798&5.5406&v3075(6,1)&&
7907&5.5617&v2817(2,3)&\\
8182&5.6070&v3262(-3,2)&&
8217&5.6137&v3015(4,3)&\\
8275&5.6205&v3110(3,2)&&
8391&5.6370&v3245(-5,1)&\\
8504&5.6552&v3358(-1,3)&&
8655&5.6765&v3257(-5,1)&\\
8874&5.7096&v3347(-4,1)&&
8971&5.7261&v3268(-4,1)&\\
9050&5.7392&v3195(-6,1)&&
9124&5.7510&v3237(-4,3)&\\
9333&5.7853&v3105(-5,2)&&
9369&5.7914&v3398(5,1)&\\
9380&5.7937&v3229(4,1)&&
9397&5.7962&v3137(-1,4)&\\
9438&5.8038&v3189(4,3)&&
9456&5.8058&v3193(-1,4)&\\
9578&5.8245&v3397(-2,3)&&
9675&5.8432&v3252(1,3)&\\
9731&5.8530&v3245(-6,1)&&
9861&5.8746&v3417(-3,1)&\\
9935&5.8868&v3289(-5,2)&&
9989&5.8990&v3534(3,2)&\\
10017&5.9062&v3486(3,2)&&
10030&5.9093&v3347(-2,3)&\\
10035&5.9104&v3451(3,1)&&
10049&5.9154&v3280(-5,2)&\\
10062&5.9175&v3445(5,1)&&
10120&5.9322&v3499(-3,2)&\\
10195&5.9498&v3340(-5,2)&&
10356&5.9961&v3540(-3,1)&\\
10540&6.0561&v3541(-3,2)&&
10544&6.0577&v3418(-1,3)&\\
10671&6.1011&v3462(-2,3)&&
10922&6.2144&v3502(2,3)&\\
10951&6.2275&v3500(1,4)&&
11014&6.2712&v3500(5,2)&\\
\hline
\end{tabular} 
\end{table}

\end{document}